\documentclass[10pt,leqno]{amsart}

\usepackage[T1]{fontenc}
\usepackage[utf8]{inputenc}
\usepackage[english]{babel}
\usepackage[left=1.1in,right=1.1in,top=1.5in,bottom=1.5in]{geometry}
\usepackage[foot]{amsaddr}
\usepackage{amssymb}
\usepackage{amsthm}
\usepackage{amsmath}
\usepackage{amsfonts}
\usepackage{mathtools}
\usepackage{enumitem}
\usepackage[dvipsnames]{xcolor}
\usepackage{tikz}
\usetikzlibrary{arrows.meta,calc,decorations.markings}

\definecolor{dkblue}{RGB}{30,90,140}
\definecolor{outflowblue}{RGB}{32,94,151}
\definecolor{outflowred}{RGB}{181,62,55}
\definecolor{outflowgreen}{RGB}{39,126,103}
\usepackage[
  colorlinks=true,
  pdfstartview=Fit, 
  linkcolor=dkblue,
  citecolor=dkblue,
  urlcolor=dkblue
]{hyperref}

\theoremstyle{plain}
\newtheorem{theorem}{Theorem}[section]
\newtheorem{proposition}[theorem]{Proposition}
\newtheorem{lemma}[theorem]{Lemma}

\theoremstyle{definition}
\newtheorem{definition}[theorem]{Definition}

\theoremstyle{remark}
\newtheorem{remark}[theorem]{Remark}

\numberwithin{equation}{section}

\newcommand{\R}{\mathbb R}
\newcommand{\B}{\mathcal B}
\newcommand{\Om}{\Omega}
\newcommand{\obs}{\mathcal K}
\newcommand{\er}{e_r}
\newcommand{\etheta}{e_\theta}
\newcommand{\dd}{\,\mathrm d}

\DeclareMathOperator{\dist}{dist}
\DeclareMathOperator{\curl}{curl}
\DeclareMathOperator{\diver}{div}
\DeclareMathOperator{\sign}{sign}

\begin{document}
\allowdisplaybreaks

\title[Rigidity results for steady Euler flows in two dimensional exterior domains]
{Rigidity results for Euler flows in two dimensional exterior domains}

\author[D. Alonso-Or\'an]{Diego Alonso-Or\'an}
\address{Departamento de An\'{a}lisis Matem\'{a}tico and Instituto de
Matem\'{a}ticas y Aplicaciones (IMAULL), Universidad de La Laguna,
C/Astrof\'{i}sico Francisco S\'{a}nchez s/n, 38271 La Laguna, Spain}
\email{\href{mailto:dalonsoo@ull.edu.es}{dalonsoo@ull.edu.es}}

\author[D. S\'anchez-Sim\'on del Pino]{Daniel S\'anchez-Sim\'on del Pino}
\address{Institute for Applied Mathematics, University of Bonn,
Endenicher Allee 60, D-53115 Bonn, Germany}
\email{\href{mailto:sanchez@iam.uni-bonn.de}{sanchez@iam.uni-bonn.de}}

\author[J. J. L. Vel\'azquez]{J. J. L. Vel\'azquez}
\address{Institute for Applied Mathematics, University of Bonn,
Endenicher Allee 60, D-53115 Bonn, Germany}
\email{\href{mailto:velazquez@iam.uni-bonn.de}{velazquez@iam.uni-bonn.de}}

\date{\today}

\subjclass[2020]{Primary 35Q31}
\keywords{Steady Euler equations, exterior domains, stagnation points}

\begin{abstract}
We study steady solutions of the two dimensional incompressible Euler
equations in the exterior of a compact domain. We assume that fluid enters the exterior domain through every point of the obstacle boundary. Under suitable assumptions on the direction
and decay of the velocity at infinity, we prove that the flow is
irrotational if and only if the velocity has no stagnation points.
\end{abstract}

\maketitle
\tableofcontents

\section{Introduction}\label{sec:introduction}

In this paper we study steady solutions of the incompressible Euler
equations in the exterior of a bounded two-dimensional domain. Let
$\obs\subset\mathbb R^2$ be a bounded domain whose boundary is a regular
$C^2$ Jordan curve, and set
$\Om:=\mathbb R^2\setminus\obs$. The set $\obs$
represents an obstacle and $\Om$ is the region occupied by the fluid.
 We consider a velocity field
$v\in C^1(\overline\Om;\mathbb R^2)$ and a
pressure $p\in C^1(\Om)$ satisfying
\begin{equation}\label{eq:euler-intro}
  (v\cdot\nabla)v+\nabla p=0,
  \qquad
  \diver v=0
  \qquad\textup{in }\Om.
\end{equation}
The equations are understood pointwise. We denote by $N$ the unit normal
to $\partial\obs$ pointing into $\Om$ and assume that
\begin{equation}\label{eq:source-boundary-intro}
  v\cdot N>0
  \qquad\textup{on }\partial\obs.
\end{equation}

We also impose conditions on the velocity at infinity. Writing
$v=v_r\er+v_\theta\etheta$ in polar coordinates, we assume that
$v_r>0$ and $|v|^2/v_r\leq C/|x|$ outside a sufficiently large disk.
These assumptions are stated precisely in \eqref{eq:far-field-cost}.
The first condition means that the distance from the origin increases
along a trajectory whenever it lies outside this disk. The second
condition controls the circulation along such trajectories and implies
$|v(x)|\leq C/|x|$. It holds, for example, if
$0<v_r\leq M/|x|$ and $|v_\theta|\leq K v_r$ there, for some
$M,K>0$. A natural direction for future work is to identify the weakest conditions at infinity under which the absence of stagnation points forces the fluid to be irrotational. \medskip

Our main result states that, under these assumptions, the velocity does
not vanish in $\overline\Om$ if and only if the flow is irrotational.
In particular, if the vorticity is not identically zero, there must be a
point in $\Om$ where the velocity vanishes. Such a point is called a
stagnation point. The conclusion does not require radial symmetry of the
flow or of the obstacle.

\subsection{Relation with magnetohydrostatics}\label{subsec:mhs-motivation}

The problem can also be written in terms of the equations of ideal
magnetohydrostatics. A static magnetic field $B$ and a plasma pressure
$p_{\mathrm m}$ satisfy
\begin{equation}\label{eq:mhs-intro}
  (\curl B)\times B=\nabla p_{\mathrm m},
  \qquad
  \diver B=0.
\end{equation}
The identity
\[
  (\curl B)\times B
  =(B\cdot\nabla)B-\nabla\left(\frac12|B|^2\right),
\]
shows that \eqref{eq:mhs-intro} is equivalent to the steady Euler
equations under the identification
\begin{equation}\label{eq:euler-mhs-identification}
  v=B,
  \qquad
  p=-p_{\mathrm m}-\frac12|B|^2.
\end{equation}
For a two dimensional field $B=(B_1,B_2,0)$,
the scalar vorticity corresponds to the third component of $\curl B$.
A stagnation point corresponds to a zero of the magnetic field. Our
theorem therefore implies that, under the stated boundary condition and
assumptions at infinity, a two dimensional equilibrium with nonzero current must
have a point where the magnetic field vanishes. \medskip

The relation between the two systems also connects our problem with the boundary value problems proposed by Grad and Rubin
\cite{GradRubin1958}. These problems prescribe the normal component of
the magnetic field on the boundary, together with additional data on the
part of the boundary through which the field enters the domain. Depending
on the formulation, these additional data concern the current, the
pressure or the tangential component of the magnetic field. The equations
then determine quantities along the magnetic field lines. This leads to
a combination of transport equations for the vorticity $\omega$ and a div-curl system for the velocity $v$. \medskip

Existence results for magnetohydrostatic equilibria were obtained
by Lortz \cite{Lortz1970}. In the context of steady Euler flows, Alber
\cite{Alber1992} used transport of the vorticity to construct three
dimensional solutions with non-vanishing vorticity. More recently, problems of the type proposed by
Grad and Rubin have been studied in two dimensions
\cite{AlonsoOranVelazquez2024}, in three dimensions for perturbations of
a reference configuration
\cite{AlonsoOranSanchezSimonVelazquez2026}, and in domains that include
perturbations of annuli and spherical shells
\cite{SanchezSimonVelazquez2026}. These results give existence of
equilibria whose field lines connect different parts of the boundary.
In our exterior problem, the transport equation instead leads to a
restriction. If the field does not vanish, the current must be zero. \medskip

\subsection{Previous rigidity results}\label{subsec:previous-results}

The relation between stagnation points and the structure of steady Euler
flows has been studied in several domains. Hamel and Nadirashvili
\cite{HamelNadirashvili2017} proved that a steady flow in a strip is a
shear flow if the velocity is tangent to the boundary and its magnitude
is bounded away from zero. They also obtained a corresponding result in
a half plane under an additional boundedness assumption on the velocity.
In the whole plane, they proved that a bounded steady flow whose speed
is bounded away from zero must be a parallel shear flow
\cite{HamelNadirashvili2019}. Their arguments use the geometry of the
trajectories and elliptic equations for the stream function.

The assumption that the speed is bounded away from zero is important.
De Regibus and Ruiz \cite{DeRegibusRuiz2025} constructed bounded steady
Euler flows in the whole plane which have no stagnation points and are
not shear flows. Thus the absence of zeros alone does not imply the
conclusion of the preceding result. In our problem the velocity tends
to zero at infinity. The boundary condition and the additional control
of its radial component provide the information used in the proof. \medskip

Hamel and Nadirashvili also studied bounded annuli, exterior circular
domains and punctured domains \cite{HamelNadirashvili2023}. They assume that the velocity is tangent to the boundary and does
not vanish. In unbounded domains they impose additional conditions
at infinity, and in punctured domains they impose additional
conditions near the puncture. Under these assumptions, they prove
that the streamlines are concentric circles. The proof uses trajectories of the velocity, trajectories of
the gradient of the stream function and the moving plane method for a
semilinear elliptic equation. Wang and Zhan \cite{WangZhan2024} proved
circular symmetry in disks with exactly one interior stagnation point
and in annuli without interior stagnation points. Cao, Fan and Zhan
\cite{CaoFanZhan2024} studied related free boundary problems in exterior
domains. Under their assumptions, a flow tangent to the boundary, with
constant boundary speed and no interior stagnation points, must be
circular and the obstacle must be a disk.\medskip

Drivas and Nualart \cite{DrivasNualart2026} considered straight periodic
channels and circular annuli with velocity tangent to the boundary. They showed
that, if every streamline is a closed curve which cannot be contracted
to a point in the domain, then the flow is parallel or
circular, respectively. In our problem the flux through the boundary
of the obstacle is positive and the trajectories under consideration
go to infinity. This flux prevents the existence of a stream function
defined on all of $\Om$. The reduction to a semilinear elliptic equation
for a global stream function is therefore not directly available.\medskip

Constantin, Drivas and Ginsberg \cite{ConstantinDrivasGinsberg2021}
proved symmetry results for suitable steady Euler, Boussinesq and
magnetohydrostatic flows in periodic channels and for axisymmetric
configurations in cylinders. They also constructed families of steady
states by deforming suitable reference solutions and their domains.
Elgindi, Huang, Said and Xie \cite{ElgindiHuangSaidXie2026} studied
analytic steady Euler flows in bounded simply connected planar domains
with analytic boundary. When the velocity is tangent to the boundary, they
proved that the stream function is either radial or satisfies a
semilinear elliptic equation of the form $\Delta\psi=F(\psi)$.
For smooth flows, however, the vorticity need not be a function of the
stream function on the whole domain. Elgindi and Huang
\cite{ElgindiHuang2026} constructed examples with this property in
which all critical points of the stream function are nondegenerate.
 \medskip

Gui, Xie and Xu \cite{GuiXieXu2026} obtained a classification in terms
of the directions of the velocity. For a bounded steady flow in the
whole plane, these directions cover the unit circle unless the flow is
a parallel shear flow. For irrotational flows, Hamel and Karakhanyan
\cite{HamelKarakhanyan2025} proved that a potential flow in an infinite
cylinder is constant and parallel to its axis if the velocity is tangent to the boundary and the speed is bounded away from zero. Related results
on symmetry and on the existence of solutions without radial symmetry
involve compactly supported flows and flows in annular domains
\cite{Ruiz2023,EncisoFernandezRuizSicbaldi2025}.\medskip

Other works allow stagnation points or regions where the velocity
vanishes. Li, L\"u, Shahgholian and Xie
\cite{LiLuShahgholianXie2026} studied existence and uniqueness of steady
flows in infinitely long nozzles and the regularity of the boundaries
of such regions. De Regibus, Esposito and Ruiz
\cite{DeRegibusEspositoRuiz2026} proved that every classical steady
Euler flow with finite energy in the whole plane has a stagnation
point. They also obtained classification results under additional
assumptions on the set of stagnation points and on the vorticity.
The positive flux in our problem forces the energy to be infinite,
as shown in Remark~\ref{rem:critical-energy}.\medskip

Huang, Xie and Zhang \cite{HuangXieZhang2026} recently studied steady
flows in an infinite channel with prescribed shear flows as limits at
infinity. The speeds of these limiting flows are bounded away from
zero. For a broad class of prescribed profiles, they constructed smooth
steady solutions which are not shear flows. For a dense family of
analytic profiles, they proved that every analytic steady state with
the prescribed limits is a shear flow. \medskip

This paper then provides a rigidity result in the sense that it restricts class of boundary value problems that can be solved \eqref{eq:euler-intro} in an exterior domain. For instance, it is not possible to extend the results in \cite{AlonsoOranSanchezSimonVelazquez2026,AlonsoOranVelazquez2024} to an unbounded domain. Indeed, in these articles, if one introduces a small enough perturbation so that there are no stagnation points, the resulting flow has to be potential. This then restricts the possible boundary values. The main differences in the present problem are the positive flux
through the obstacle boundary and the decay of the velocity at infinity. We do not assume a positive uniform lower bound on the speed or radial symmetry of the obstacle. We prove that the absence of
stagnation points forces the vorticity to vanish. The techniques here are also substantially different from the ones used in previous rigidity results, since the decay of our velocity field prevents us to define a single valued stream function. Therefore, the elliptic PDE techniques in previous papers do not apply. 

\subsection{Main result}\label{subsec:contributions}
Let $\obs\subset\R^{2}$ be a compact domain, with $0\in \obs$, whose boundary is a regular $C^{2}$ Jordan curve. Define $\Om:=\R^2\setminus\obs$.
The normal $N$ points from $\obs$ into $\Om$, so the
outward unit normal of $\Om$ on $\partial\obs$ is $-N$. For $x\neq0$,
we write
\[
  r=|x|,
  \qquad
  \er(x)=\frac{x}{|x|},
  \qquad
  \etheta(x)=\er(x)^\perp,
  \qquad
  (y_1,y_2)^\perp=(-y_2,y_1).
\]

We collect the assumptions used in the theorem. The velocity and
pressure satisfy
\begin{equation}\label{eq:regularity}
  v\in C^1(\overline\Om;\mathbb R^2),
  \qquad
  p\in C^1(\Om),
\end{equation}
and solve \eqref{eq:euler-intro} pointwise. At the boundary we assume
\begin{equation}\label{eq:source-boundary}
  v\cdot N>0
  \qquad\textup{on }\partial\obs.
\end{equation}
At infinity we assume that there exist $R_0,C>0$ such that
\begin{equation}\label{eq:far-field-cost}
  v_r(x):=v(x)\cdot\er(x)>0,
  \qquad
  \frac{|v(x)|^2}{v_r(x)}\leq\frac{C}{|x|}
  \qquad\textup{for }|x|\geq R_0.
\end{equation}
Increasing $R_0$ if necessary, we may assume, without loss of generality, that
\begin{equation}\label{eq:obstacle-inside-R0}
  \obs\subset B_{R_0}.
\end{equation}
Since $0<v_r\leq|v|$, condition \eqref{eq:far-field-cost} gives
\begin{equation}\label{eq:speed-upper}
  |v(x)|\leq\frac{C}{|x|}
  \qquad\textup{for }|x|\geq R_0.
\end{equation}
Indeed, $|v|^2\leq(C/|x|)v_r\leq(C/|x|)|v|$, and $|v|>0$ in this
region. \medskip

A sufficient condition for \eqref{eq:far-field-cost} is that there
exist $M,K>0$ such that
\begin{equation}\label{eq:far-field-cone}
  0<v_r(x)\leq\frac{M}{|x|},
  \qquad
  |v_\theta(x)|\leq K v_r(x)
  \qquad\textup{for }|x|\geq R_0.
\end{equation}
In fact,
\begin{equation}\label{eq:cone-implies-cost}
  \frac{|v|^2}{v_r}
  =v_r+\frac{v_\theta^2}{v_r}
  \leq(1+K^2)v_r
  \leq\frac{(1+K^2)M}{|x|}.
\end{equation}

We denote the scalar vorticity and the Bernoulli function by
\begin{equation}\label{eq:omega-B}
  \omega:=\partial_1v_2-\partial_2v_1,
  \qquad
  \B:=p+\frac12|v|^2.
\end{equation}
The Euler equations imply
\begin{equation}\label{eq:euler-identities}
  \diver(\omega v)=0\quad\textup{in }\mathcal D'(\Om),
  \qquad
  \nabla\B=-\omega v^\perp\quad\textup{in }\Om.
\end{equation}
The first identity follows by taking the scalar curl of the momentum
equation in the sense of distributions. The second holds pointwise
and follows from
$(v\cdot\nabla)v=\nabla(|v|^2/2)+\omega v^\perp$. \medskip

\begin{figure}[t]
  \centering
  \begin{tikzpicture}[
      scale=0.94,
      >=Stealth,
      line cap=round,
      line join=round,
      obstacle/.style={draw=black!75,very thick,fill=black!7},
      farcircle/.style={draw=black!42,densely dashed,line width=0.8pt},
      streamline/.style={draw=outflowblue,line width=1.15pt,
        postaction={decorate},
        decoration={markings,mark=at position 0.67 with
          {\arrow{Stealth[length=2.2mm]}}}},
      boundaryarrow/.style={->,draw=outflowred,line width=1.1pt}
    ]
    \draw[farcircle] (0,0) circle (2.55);
    \node[font=\small,black!60]
      at (2.15,-1.62) {$\partial B_{R_0}$};

    \path[obstacle]
      plot[smooth cycle,tension=0.75] coordinates
      {(-1.08,-0.22) (-0.72,-0.86) (0.12,-0.98)
       (0.92,-0.46) (1.02,0.30) (0.39,0.92)
       (-0.48,0.84) (-1.02,0.38)};
    \node[font=\small] at (-0.05,-0.02) {$\obs $};
    \node[font=\small] at (-3.15,1.72)
      {$\Omega=\mathbb R^2\setminus{\mathcal K}$};

    \draw[boundaryarrow] (-0.74,0.63) -- (-1.18,1.03);
    \draw[boundaryarrow] (-0.86,-0.47) -- (-1.40,-0.72);
    \draw[boundaryarrow] (0.39,-0.86) -- (0.60,-1.42);

    \draw[streamline]
      (0.93,0.03) .. controls (1.70,0.12) and (2.04,0.72) ..
      (2.72,0.98) .. controls (3.35,1.20) and (4.05,1.18) .. (4.72,1.42);
    \draw[streamline]
      (0.57,0.74) .. controls (1.12,1.28) and (1.68,1.45) ..
      (2.23,1.82) .. controls (2.92,2.28) and (3.73,2.42) .. (4.55,2.66);
    \draw[streamline]
      (0.63,-0.72) .. controls (1.14,-1.25) and (1.90,-1.30) ..
      (2.50,-1.58) .. controls (3.13,-1.87) and (3.87,-1.96) .. (4.63,-2.12);
    \draw[streamline]
      (-0.80,-0.58) .. controls (-1.55,-1.18) and (-1.87,-1.82) ..
      (-2.52,-2.13) .. controls (-3.07,-2.40) and (-3.65,-2.44) .. (-4.24,-2.62);
  \end{tikzpicture}
  \caption{Geometry of the exterior flow. The red arrows indicate fluid
  entering the exterior domain through the boundary. Outside
  the dashed circle, the radial component of the velocity is positive.
  The blue curves illustrate trajectories that tend to infinity.}
  \label{fig:exterior-outflow-geometry}
\end{figure} 

We can now state the main result.

\begin{theorem}\label{thm:main}
Let $\obs$, $\Om$ and $(v,p)$ satisfy 
\eqref{eq:euler-intro}, \eqref{eq:regularity}, \eqref{eq:source-boundary} and
\eqref{eq:far-field-cost}. Then the following statements are equivalent:
\begin{enumerate}[label=\textup{(\roman*)}]
  \item\label{itemi} $v(x)\neq0$ for every $x\in\overline\Om$.
  \item \label{itemii}$\omega\equiv0$ in $\Om$.
\end{enumerate}
\end{theorem}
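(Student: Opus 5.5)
The plan is to prove the two implications separately: the direction (ii)$\Rightarrow$(i) by complex analysis, and the direction (i)$\Rightarrow$(ii) by studying the trajectories of $v$ together with a circulation estimate at infinity.

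\emph{(ii)$\Rightarrow$(i).} If $\omega\equiv0$ then, together with $\diver v=0$, the function $f:=v_1-iv_2$ is holomorphic in $\Om$ and continuous up to $\partial\obs$. By \eqref{eq:source-boundary}, $v$ has no zeros on $\partial\obs$; by \eqref{eq:far-field-cost}, it has no zeros in $|x|\ge R_0$. We apply the argument principle to $f$ on the region $B_R\setminus\obs$ with $R\geq R_0$.
\begin{itemize}
\item On $\partial B_R$ we have $v\cdot\er>0$, so $v$ is homotopic to $\er$ through nonvanishing fields. Hence $v$ has winding number $1$ along this curve.
\item On $\partial\obs$ we have $v\cdot N>0$. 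Since $\partial\obs$ is a $C^2$ Jordan curve, the normal $N$ also has winding number $1$, and so does $v$.
\end{itemize}
Thus $f=\bar v$ has winding number $-1$ on both boundary components. The number of zeros of $f$ in the annular region, counted with positive multiplicities, is therefore $(-1)-(-1)=0$. Hence $v\neq0$ on $\overline\Om$.

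\emph{(i)$\Rightarrow$(ii), first step: structure of the trajectories.} Assume $v\neq0$ on $\overline\Om$. We first show that every trajectory of $\dot X=v(X)$ in $\Om$ comes from $\partial\obs$ in finite backward time and tends to infinity in forward time.
\begin{itemize}
\item Outside $B_{R_0}$ the radius $|X|$ increases. Hence a backward trajectory stays in a compact set, and a forward trajectory that stays bounded also stays in a compact set.
\item If such a trajectory remains in $\Om$ for all (backward or forward) time, Poincaré–Bendixson gives a periodic orbit $\gamma$ in its limit set, because there are no equilibria.
\item If $\gamma$ does not enclose $\obs$, then the index of $v$ along $\gamma$ is $1$, which forces a zero of $v$ inside $\gamma$. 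This is a contradiction.
\item If $\gamma$ encloses $\obs$, then the flux of $v$ through $\gamma$ is $0$ because $v$ is tangent to $\gamma$. By the divergence theorem this flux equals $\int_{\partial\obs}v\cdot N>0$, again a contradiction.
\end{itemize}
Consequently the map $(s,t)\mapsto X(s,t)$ is a $C^1$ diffeomorphism from $\partial\obs\times[0,\infty)$ onto $\overline\Om$. Here $s$ is arclength on $\partial\obs$ and $X(s,\cdot)$ is the trajectory starting at $s$. Since $\diver v=0$, Liouville's formula shows that the Jacobian satisfies $\det(\partial_sX,\partial_tX)=(v\cdot N)(s)$ up to a fixed sign, independently of $t$. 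We note in passing that this determinant equals $-v^\perp\cdot\partial_sX$.

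\emph{Second step: $\omega$ is constant along trajectories.} By \eqref{eq:euler-identities} we have $v\cdot\nabla\B=0$, so $\B(X(s,t))=b(s)$ depends only on $s$, with $b\in C^1$ because $p\in C^1$. Differentiating in $s$ and using $\nabla\B=-\omega v^\perp$ gives
\[
  b'(s)=-\omega(X)\,v^\perp\cdot\partial_sX=\pm\,\omega(X(s,t))\,(v\cdot N)(s).
\]
Therefore $\omega(X(s,t))=\pm b'(s)/(v\cdot N)(s)$ is independent of $t$. This argument avoids working with the transport equation in the distributional sense.

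\emph{Third step: circulation versus area (the main obstacle).} Suppose $\omega\not\equiv0$. Then there is an arc $I\subset\partial\obs$ on which, say, $\omega\geq c_0>0$ along all trajectories issuing from $I$. Let $D_R$ be the part of the tube $X(I\times[0,\infty))$ inside $B_R$. We compare two sides of Stokes' formula $\int_{D_R}\omega=\oint_{\partial D_R}v\cdot\mathrm d\ell$.

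\emph{Lower bound.} By the Jacobian identity,
\[
  |D_R|=\int_I T_R(s)\,(v\cdot N)(s)\dd s,
\]
where $T_R(s)$ is the time the trajectory from $s$ needs to reach $\partial B_R$. Since $\dot r=v_r\le C/r$ for $r\ge R_0$, we get $T_R\gtrsim R^2$. Hence $\int_{D_R}\omega\gtrsim c_0R^2$.

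\emph{Upper bound.} The boundary $\partial D_R$ consists of four pieces.
\begin{itemize}
\item The arc $I$ contributes a bounded amount.
\item The piece on $\partial B_R$ contributes at most $2\pi R\cdot C/R$, using \eqref{eq:speed-upper}.
\item The portions of the two lateral trajectories inside $B_{R_0}$ contribute a bounded amount, since they have finite time and $v$ is bounded there.
\item The portions of the two lateral trajectories outside $B_{R_0}$ have circulation
\[
  \int|v|^2\dd t=\int_{R_0}^R\frac{|v|^2}{v_r}\dd r\le C\log R,
\]
by \eqref{eq:far-field-cost}.
\end{itemize}
Thus $\int_{D_R}\omega=O(\log R)$, which contradicts the lower bound $c_0R^2$ as $R\to\infty$. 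Hence $\omega\equiv0$.

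The delicate points are making the tube and its boundary rigorous and justifying Stokes' formula for the $C^1$ field $v$ on this piecewise $C^1$ domain. These should be routine once the diffeomorphism $X$ is in place.
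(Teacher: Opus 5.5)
Your proof is correct and has the same overall architecture as the paper's. Both use the argument principle for (ii)$\Rightarrow$(i). For (i)$\Rightarrow$(ii), both exclude periodic orbits by the index and flux arguments, obtain escape to infinity from Poincar\'e--Bendixson, show $\omega$ is constant along streamlines, and compare an $R^2$ lower bound for $\int\omega$ over a stream tube with an $O(\log R)$ circulation bound. Two sub-steps are done differently.

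\textbf{Transport of vorticity.} The paper takes the curl of the momentum equation in $\mathcal D'(\Om)$, tests it in a local flow box, and uses Liouville's formula to get $\partial_t(J\,\omega\circ F)=0$ in the sense of distributions. You instead use that $\mathcal B\in C^1(\Om)$ is constant along trajectories and differentiate $\mathcal B(X(s,t))=b(s)$ in the transverse variable. The identity $\nabla\mathcal B=-\omega v^\perp$ and the $t$-independence of the Jacobian then give $\omega(X(s,t))\,J(s)=b'(s)$ pointwise. This is shorter and purely classical, and it would work equally well in the paper's local flow box.

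\textbf{The tube.} The paper starts from an arc of a large circle $\partial B_{R_1}$, so it needs only forward escape. It gets the area bound from conservation of the tube flux, the angular-width bound $\vartheta(\rho)\geq\Phi_T/C$, and the coarea formula. You build a global flow box $\partial\obs\times[0,\infty)\to\overline\Om$, which has some costs:
\begin{enumerate}
\item You need a second Poincar\'e--Bendixson argument, for backward orbits.
\item To flow from points of $\partial\obs$ with $C^1$ dependence, you must extend $v$ in $C^1$ across $\partial\obs$, or start from a level set $\{d=\varepsilon\}$.
\item Applying Poincar\'e--Bendixson to bounded orbits needs a collar argument as in Lemma~\ref{lem:boundary-repulsion}, to keep them in a compact subset of $\Om$.
\end{enumerate}
In return, your area bound reduces to the identity $|D_R|=\int_I T_R(s)\,(v\cdot N)(s)\dd s$ together with $\tfrac{d}{dt}|X|^2=2|X|v_r\leq 2C$ for $|X|\geq R_0$. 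The extra boundary pieces of your tube, namely $I$ itself and the parts of the lateral streamlines inside $B_{R_0}$, contribute amounts independent of $R$, as you note.
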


Since \eqref{eq:source-boundary} excludes zeros on $\partial\obs$, any
stagnation point lies in $\Om$. 
Figure~\ref{fig:exterior-outflow-geometry} illustrates the geometry.

\subsection{Strategy of the proof and remarks}\label{subsec:proof-strategy}
We briefly describe the proof of Theorem~\ref{thm:main}.
We first prove implication \ref{itemii} $\Rightarrow$ \ref{itemi}
by the argument principle. If the flow is
irrotational, incompressibility implies that the complex velocity
$q=v_1-iv_2$ is holomorphic. The condition $v\cdot N>0$ determines
the winding number of $q$ along the boundary of the obstacle.
The condition $v_r>0$ gives the same winding number along every
sufficiently large circle, when both curves are oriented
counterclockwise. The argument principle then shows that $q$ has
no zeros in $\Om$. Since the boundary condition also excludes zeros
on $\partial\obs$, the velocity does not vanish in $\overline\Om$.
For this implication we only need the positivity of $v_r$ at infinity,
without the bound on $|v|^2/v_r$.

For implication \ref{itemi} $\Rightarrow$ \ref{itemii}, we assume
that $v$ does not vanish in $\overline\Om$. We first show that
every trajectory tends to infinity in forward time. In two dimensions,
the vorticity of a steady Euler flow is constant along trajectories.
Therefore, if the vorticity is nonzero at some point, we can construct
a tube of trajectories on which the vorticity has a fixed sign and
remains bounded away from zero. \medskip

We now estimate the integral of the vorticity over this tube in two
different ways. The flux through each circular section of the tube
is positive and independent of the radius. Combining this fact with
the bound $|v(x)|\leq C/|x|$, we obtain a quadratic lower bound for
the area of the tube truncated at radius $R$. Since the vorticity
has a fixed sign and remains bounded away from zero, the absolute
value of its integral over the tube also has a quadratic lower bound.
On the other hand, Stokes' theorem allows us to estimate this integral
using the values of the velocity on the boundary of the tube.
The bound on $|v|^2/v_r$ shows that the circulation around the tube grows at most logarithmically with $R$.
Comparing these two estimates and letting $R\to\infty$,
we obtain a contradiction. \medskip

\begin{remark}\label{rem:source-condition}
Condition \eqref{eq:source-boundary} means that fluid enters $\Om$ through
the boundary of the obstacle. Indeed, the outward normal of $\Om$ there
is $-N$, and $v\cdot(-N)<0$. By continuity and compactness,
$v\cdot N\geq c_0>0$ on $\partial\obs$ for some $c_0$. It follows that
the velocity has no zeros on the boundary. 
\end{remark}

\begin{remark}\label{rem:critical-energy}
The positive flux through the obstacle boundary implies that the
energy is infinite. To see this, let
\[
  \Phi:=\int_{\partial\obs}v\cdot N\dd\ell>0.
\]
For every $r\geq R_0$, the divergence theorem in
$B_r\setminus\obs$ gives
\begin{equation}\label{eq:source-flux-circles}
  \Phi=\int_{\partial B_r}v_r\dd\ell.
\end{equation}
Applying Cauchy's inequality on $\partial B_r$, we obtain
\begin{equation}\label{eq:circle-energy-lower}
  \int_{\partial B_r}|v|^2\dd\ell
  \geq\frac{\Phi^2}{2\pi r}.
\end{equation}
Hence, for $R>R_0$,
\[
  \int_{B_R\setminus B_{R_0}}|v|^2\dd x
  \geq\frac{\Phi^2}{2\pi}\log\frac{R}{R_0}.
\]
In particular, $\int_\Om|v|^2\dd x=\infty$. The results for flows
with finite energy do not apply in this setting.
\end{remark}

\begin{remark}
A familiar sufficient condition for \eqref{eq:far-field-cost} is the
asymptotic behavior
\begin{equation}\label{eq:physical-profile}
  v(x)=\frac{a}{|x|}\er(x)+\frac{b}{|x|}\etheta(x)
  +o(|x|^{-1})
  \qquad\textup{as }|x|\to\infty,
\end{equation}
where $a>0$, $b\in\mathbb R$.
Indeed,
\[
  v_r=\frac{a}{|x|}+o(|x|^{-1}),
  \qquad
  \frac{|v|^2}{v_r}
  =\frac{a^2+b^2}{a|x|}+o(|x|^{-1}),
\quad \textup{as }|x|\to\infty.\]
These relations imply \eqref{eq:far-field-cost} after increasing $R_0$
and choosing $C$ sufficiently large.
\end{remark}

\begin{remark}\label{rem:rigid-wall}
If the velocity is tangent to the boundary of the obstacle, a flow
without stagnation points need not be irrotational. For example, in the exterior of the
disk $B_a$, let
\[
  v(r,\theta)=V(r)\etheta,
\]
where $V\in C^1([a,\infty))$ is everywhere positive or everywhere
negative. This field has no zeros and solves the steady Euler equations
with a radial pressure satisfying $p'(r)=V(r)^2/r$. Its vorticity is
$\omega=V'(r)+V(r)/r$, which need not vanish. 
These circular flows do not satisfy $v_r>0$. More generally, an
incompressible flow tangent to the obstacle boundary has zero flux
through every circle enclosing the obstacle. It therefore cannot have
$v_r>0$ on any such circle. This example concerns a different boundary
condition and does not satisfy the assumptions of
Theorem~\ref{thm:main}. 
\end{remark}

\subsection{Notation and preliminaries}\label{subsec:notation}

For $R>0$, we write
$B_R:=\{x\in\mathbb R^2:|x|<R\}$. Throughout the paper,
$C^1(\overline\Om)$ means $C^1$ regularity in $\Om$ with the function
and its first derivatives extending continuously to $\overline\Om$.
This notation does not impose boundedness at infinity. We write $\mathcal D'(U)$
for the space of distributions on an open set $U$. If $f$ is a
holomorphic function which is not identically zero, we denote the order
of a zero at $z$ by $\operatorname{ord}_z(f)$.

We identify $\mathbb R^2$ with $\mathbb C$ by writing $z=x_1+ix_2$.
For a vector field $v=(v_1,v_2)$, its complex velocity is
\begin{equation}\label{eq:complex-velocity}
  q(z):=v_1(z)-iv_2(z).
\end{equation}
We use the Wirtinger derivatives
\[
  \partial_z:=\frac12(\partial_{x_1}-i\partial_{x_2}),
  \qquad
  \partial_{\bar z}:=\frac12(\partial_{x_1}+i\partial_{x_2}).
\]
A direct computation gives
\begin{equation}
  2\partial_{\bar z}q
=\left(\partial_{x_1}+i\partial_{x_2}\right)(v_1-iv_2)=\partial_{x_1}v_1+\partial_{x_2}v_2
    -i\left(\partial_{x_1}v_2-\partial_{x_2}v_1\right).
\end{equation}
Therefore,
\begin{equation}\label{eq:dbar-q}
  2\partial_{\bar z}q=\diver v-i\omega.
\end{equation}
In particular, $q$ is holomorphic if $\diver v=0$ and $\omega=0$.
If $\mathsf v:=v_1+iv_2$ is the usual complex representation of the
vector field, then $q=\overline{\mathsf v}$. Thus $q(z)=0$ if and
only if $v(z)=0$.

We recall some facts about closed curves. A Jordan curve is a curve
$\Gamma$ homeomorphic to $\mathbb S^1$. The Jordan curve theorem states
that $\R^2\setminus\Gamma$ has two connected components $U_1$ and $U_2$.
Both are open, with $U_1$ bounded and $U_2$ unbounded. By the Schoenflies
theorem, $\overline{U_1}$ is homeomorphic to the closed unit disk,
$U_1$ to $B_1$, and $U_2$ to $\R^2\setminus\{0\}$.
We now recall the definition of the degree of a circle map.
See \cite{DoCarmo} for the geometric background.

A parametrization of a Jordan curve $\Gamma$ is a continuous map $\gamma:[0,2\pi]\rightarrow\Gamma$ such that $\gamma|_{[0,2\pi)}$ is injective and $\gamma(0)=\gamma(2\pi)$.
\begin{definition}\label{def:circle-degree}
    Let $f:\mathbb{S}^1\rightarrow \mathbb{S}^1$ be a continuous map. Let $F:\R\rightarrow\mathbb R$ be a continuous lift of $f$, i.e., a continuous map such that
    $f(e^{it})=e^{iF(t)}$ for every $t\in\R$. We then define the degree of $f$ as
    $$\deg(f):=\frac{1}{2\pi}\left(F(2\pi)-F(0)\right).$$
\end{definition}

We now recall the basic properties of the degree. The proof of Proposition~\ref{prop:degree-properties} can be found in
\cite[Sections~1.1 and~2.2]{Hatcher2002}.
\begin{proposition}\label{prop:degree-properties}    
Let $f:\mathbb{S}^1\rightarrow\mathbb{S}^1$ be continuous. The following properties hold: 
    \begin{itemize}
        \item $\deg(f)$ is well defined, i.e., it does not depend on the choice of the lift $F$. 
        \item $\deg(f)\in \mathbb{Z}$.
        \item For every continuous  map $g:\mathbb{S}^1\rightarrow\mathbb{S}^1$, we have that
        $$\deg(f\circ g)=\deg(f)\deg(g).$$
    \end{itemize}
\end{proposition}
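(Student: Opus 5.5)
The plan is to work through the universal covering $p:\R\to\mathbb S^1$, $p(s)=e^{is}$, and prove the three assertions in order: existence and essential uniqueness of lifts, integrality, and multiplicativity. Throughout, set $\phi(t):=f(e^{it})$, a continuous $2\pi$-periodic map $\R\to\mathbb S^1$. A lift of $f$ is a continuous $F:\R\to\R$ with $p\circ F=\phi$.

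\emph{Existence of a lift.} This is the step I expect to require the most care, although it is standard path lifting. On a compact interval $[-L,L]$, $\phi$ is uniformly continuous. So there is a partition $-L=t_0<\dots<t_k=L$ such that on each $[t_{j-1},t_j]$ the image of $\phi$ lies in an open half circle. On such an arc $p$ has a continuous inverse branch of the argument. Starting from any $F(t_0)$ with $e^{iF(t_0)}=\phi(t_0)$, I would define $F$ successively on each subinterval by choosing the branch that agrees with the value already fixed at $t_{j-1}$. This gives a lift on $[-L,L]$. Uniqueness (next step) makes the lifts on increasing intervals compatible, so letting $L\to\infty$ gives a lift on $\R$.

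\emph{Well-definedness and integrality.} If $F$ and $\tilde F$ are two lifts, then $e^{i(F-\tilde F)}\equiv1$. Hence $F-\tilde F$ is a continuous function on the connected set $\R$ with values in the discrete set $2\pi\mathbb Z$, so it is a constant $2\pi k$. Therefore $\tilde F(2\pi)-\tilde F(0)=F(2\pi)-F(0)$, and $\deg(f)$ is well defined. Since $\phi(2\pi)=\phi(0)$, we have $e^{iF(2\pi)}=e^{iF(0)}$, so $F(2\pi)-F(0)\in2\pi\mathbb Z$ and $\deg(f)\in\mathbb Z$. The same connectedness argument, applied to the continuous function $t\mapsto F(t+2\pi)-F(t)$, which takes values in $2\pi\mathbb Z$ by periodicity of $\phi$, gives the key identity
\[
F(t+2\pi)=F(t)+2\pi\deg(f)\qquad\text{for all }t\in\R.
\]

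\emph{Multiplicativity.} Let $G$ be a lift of $g$ and set $m:=\deg(g)$, $n:=\deg(f)$. Then $H:=F\circ G$ is continuous and satisfies
\[
e^{iH(t)}=f\bigl(e^{iG(t)}\bigr)=f\bigl(g(e^{it})\bigr),
\]
so $H$ is a lift of $f\circ g$. Using $G(2\pi)=G(0)+2\pi m$ and iterating the identity above $|m|$ times (forwards if $m\ge0$, backwards if $m<0$), we get
\[
H(2\pi)-H(0)=F\bigl(G(0)+2\pi m\bigr)-F\bigl(G(0)\bigr)=2\pi mn.
\]
Hence $\deg(f\circ g)=\deg(f)\deg(g)$.
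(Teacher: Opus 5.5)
Your proof is correct. The paper gives no argument of its own here; it cites Hatcher, Sections~1.1 and~2.2. There the degree of a self-map of $\mathbb S^1$ is the integer by which the induced homomorphism on $\pi_1(\mathbb S^1)\cong\mathbb Z$ (or on $H_1(\mathbb S^1)\cong\mathbb Z$) acts. Multiplicativity is then immediate from functoriality, $(f\circ g)_*=f_*\circ g_*$, and the connection with lifts comes from the path-lifting computation of $\pi_1(\mathbb S^1)$.

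You instead work directly with the lift-based Definition~\ref{def:circle-degree} that the paper actually uses. Connectedness of $\R$ and discreteness of $2\pi\mathbb Z$ give well-definedness and integrality. The identity $F(t+2\pi)=F(t)+2\pi\deg(f)$, together with the observation that $F\circ G$ lifts $f\circ g$, replaces functoriality. Your route is self-contained and avoids checking that the lift definition agrees with the homotopical or homological one. The textbook route is less hands-on but extends verbatim to maps $\mathbb S^n\to\mathbb S^n$.

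One small repair is needed in your existence step. Because you start the construction at $t_0=-L$, which moves with $L$, the lifts on $[-L,L]$ are not automatically compatible. Fix the value $F(0)$ once and for all, shifting each lift by a multiple of $2\pi$; then uniqueness on $[-L,L]$ gives compatibility. Alternatively, build $F$ only on $[0,2\pi]$ and extend it by $F(t+2\pi k):=F(t)+k\bigl(F(2\pi)-F(0)\bigr)$ for $t\in[0,2\pi]$ and $k\in\mathbb Z$. This extension is continuous, and it is a lift because $F(2\pi)-F(0)\in2\pi\mathbb Z$ and $\phi$ is $2\pi$-periodic.
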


We can use the notion of degree to define the winding number of a curve as follows. 

\begin{definition}
    Let $c:[0,2\pi]\rightarrow\mathbb{C}\setminus\{0\}$ be a continuous closed curve. We define the winding number of $c$ as 
    $$\operatorname{wind}(c,0)=\deg\left(\frac{c}{|c|}\right).$$
    Here the endpoints of $[0,2\pi]$ are identified.
    Furthermore, if $\Gamma$ is an oriented Jordan curve and $u:\Gamma\rightarrow\mathbb{R}^2\setminus\{0\}$ is continuous, we define 
    $$\deg_\Gamma (u):=\operatorname{wind}(\textsf{u}\circ \gamma,0),$$
    where $\gamma$ is any parametrization $\gamma:[0,2\pi]\rightarrow \Gamma$ and $\textsf{u}:=u_1+iu_2.$ 
\end{definition}

We recall the following standard properties of the winding number (see \cite[Section~2.2]{Hatcher2002} and \cite[Chapter~4]{Ahlfors1979}).
\begin{lemma}\label{lem:winding-properties}
The following properties hold.
\begin{enumerate}[label=\textup{(\roman*)}]
    \item $\deg_\Gamma (u)$ is well defined, i.e. it does not depend on the choice of the parametrization $\gamma$, provided $\gamma$ preserves the orientation.
  \item
  If $H:[0,2\pi]\times[0,2\pi]\to\mathbb C\setminus\{0\}$ is continuous
  and $H(s,0)=H(s,2\pi)$ for every $s\in[0,2\pi]$, then
  $\operatorname{wind}(H(s,\cdot),0)$ is independent of $s$.

  \item
  If $c$ is a continuous closed curve in
  $\mathbb C\setminus\{0\}$, then
  \[
    \operatorname{wind}(\overline c,0)
    =
    -\operatorname{wind}(c,0), \ \text{ where } \overline{c} \text{ denotes the complex conjugate.}
  \]

  \item Given any orientation-preserving parametrization $\gamma:[0,2\pi]\rightarrow \Gamma$, if we write it in complex notation as $\gamma(t)=\gamma_1(t)+i\gamma_2(t),$ it holds that 
  $$\operatorname{wind}(\gamma,0)=\frac{1}{2\pi i}\oint_\gamma\frac{\dd z}{z}.$$
  \item
  If $\Gamma$ is a regular $C^2$ Jordan curve oriented
  counterclockwise and $N$ is its exterior unit normal, then
  $\deg_\Gamma(N)=1$.
\end{enumerate}
\end{lemma}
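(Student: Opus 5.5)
We prove the five items separately. All of them follow from the lifting properties of the covering $\R\to\mathbb S^1$, $t\mapsto e^{it}$, together with Proposition~\ref{prop:degree-properties}. Only item (v) needs a genuinely geometric input, namely Hopf's rotation index theorem (Umlaufsatz).

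For (i), let $\gamma,\tilde\gamma$ be two orientation preserving parametrizations of $\Gamma$. Then $\tilde\gamma=\gamma\circ\phi$, where $\phi:=\gamma^{-1}\circ\tilde\gamma$ induces a homeomorphism of $\mathbb S^1$. A lift of $\phi$ is monotone increasing and gains exactly $2\pi$ over $[0,2\pi]$, so $\deg\phi=1$. We have
\[
\frac{\mathsf u\circ\tilde\gamma}{|\mathsf u\circ\tilde\gamma|}=\frac{\mathsf u\circ\gamma}{|\mathsf u\circ\gamma|}\circ\phi .
\]
The multiplicativity in Proposition~\ref{prop:degree-properties} then gives $\deg_\Gamma(u)$ independent of the parametrization.

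For (ii), we normalize $h:=H/|H|$. Since $[0,2\pi]^2$ is simply connected and compact, the homotopy lifting property gives a continuous $\Theta$ on the square with $h=e^{i\Theta}$. For each $s$, $\Theta(s,\cdot)$ is a lift of $h(s,\cdot)$. By the well-definedness part of Proposition~\ref{prop:degree-properties},
\[
\operatorname{wind}(H(s,\cdot),0)=\frac{1}{2\pi}\bigl(\Theta(s,2\pi)-\Theta(s,0)\bigr).
\]
This expression is continuous in $s$ and integer valued, hence constant. For (iii), if $F$ lifts $c/|c|$, then $-F$ lifts $\overline c/|\overline c|$, and the identity is immediate from Definition~\ref{def:circle-degree}.

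For (iv), we use that $\Gamma$ is a regular $C^2$ curve, so $\gamma$ may be taken $C^1$; a general continuous curve would be handled by approximation together with (ii). We write $\gamma=|\gamma|e^{iF}$ with a $C^1$ lift $F$, which exists because $\gamma/|\gamma|$ is $C^1$ and the covering is a local diffeomorphism. Then
\[
\frac{\gamma'}{\gamma}=\frac{d}{dt}\log|\gamma|+iF'.
\]
Since $|\gamma|$ is $2\pi$-periodic, integrating over $[0,2\pi]$ gives $\oint_\gamma \dd z/z=i\,(F(2\pi)-F(0))=2\pi i\operatorname{wind}(\gamma,0)$.

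For (v), which we expect to be the main obstacle, we parametrize $\Gamma$ counterclockwise with $\gamma'\neq0$. The exterior unit normal is the unit tangent rotated clockwise by $\pi/2$, that is, $N=-i\,\gamma'/|\gamma'|$ in complex notation. Multiplication by the constant $-i$ only shifts a lift by the constant $-\pi/2$, so $\deg_\Gamma(N)=\deg(\gamma'/|\gamma'|)$. The latter is the rotation index of a simple closed regular positively oriented curve, which equals $1$ by Hopf's Umlaufsatz \cite{DoCarmo}. We do not reprove that theorem here. Its standard proof deforms the secant map $(s,t)\mapsto(\gamma(t)-\gamma(s))/|\gamma(t)-\gamma(s)|$ on a triangle, chosen starting from a point of $\Gamma$ that minimizes a linear height function. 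Item (ii) applied on the triangle then reduces the computation to a half turn twice.
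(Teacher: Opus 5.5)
Your proposal is correct and supplies the standard arguments that the paper only cites (it gives no proof of this lemma beyond references to Hatcher and Ahlfors). Your reduction of (v) to the turning-tangent theorem via $N=-i\gamma'/|\gamma'|$ is the same geometric input the paper uses in Lemma~\ref{lem:index}. One small tidy-up: in (iv), pass from an arbitrary orientation-preserving parametrization to a $C^1$ one using (i) together with the parametrization invariance of $\oint_\Gamma \dd z/z$ on the rectifiable curve $\Gamma$, rather than by approximation and (ii).
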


\subsection{Organization of the paper}\label{subsec:organization}

In Section~\ref{sec:auxiliary} we prove implication \ref{itemii} $\Rightarrow$ \ref{itemi}   of Theorem \ref{thm:main}, i.e. we show that
an irrotational flow satisfying the boundary condition and $v_r>0$ at
infinity has no stagnation points. In Section~\ref{sec:proof-main} we
show implication  \ref{itemi} $\Rightarrow$ \ref{itemii} of Theorem~\ref{thm:main} by comparing
the area of a tube of trajectories with the circulation along its
boundary.

\section{Irrotational flows have no stagnation points}
\label{sec:auxiliary}

We prove implication \ref{itemii} $\Rightarrow$ \ref{itemi}
of Theorem~\ref{thm:main}.  We first state the argument principle in the form used below.

\begin{lemma}\label{lem:annular-argument-principle}
Let $\obs\subset\R^2$ be a compact domain, with $0\in\obs$, whose
boundary is a regular $C^2$ Jordan curve.
Assume that $\obs\subset B_R$ and set
$A_R:=B_R\setminus\obs$. Let
$f\in C(\overline{A_R};\mathbb C)$ be holomorphic in $A_R$ and
nonzero on $\partial A_R$.

Let $c_R(\theta):=Re^{i\theta}$ for $0\leq\theta\leq2\pi$, and let $\gamma:[0,2\pi]\to\partial\obs$ be a regular counterclockwise
$C^2$ parametrization, with $\gamma(0)=\gamma(2\pi)$ and $\gamma$
injective on $[0,2\pi)$. Then $f$ has finitely many zeros in $A_R$ and

\begin{equation}\label{eq:annular-argument-principle}
  \sum_{\substack{z\in A_R\\ f(z)=0}}
  \operatorname{ord}_z(f)
  =
  \operatorname{wind}(f\circ c_R,0)
  -
  \operatorname{wind}(f\circ\gamma,0).
\end{equation}
\end{lemma}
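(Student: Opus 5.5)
The plan is to reduce the statement to the classical argument principle on a slightly shrunk annular region, using homotopy invariance of the winding number, Lemma~\ref{lem:winding-properties}(ii), to pass to the actual boundary curves.

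\emph{Finiteness of the zeros.} Since $f$ is continuous on the compact set $\overline{A_R}$ and nonzero on $\partial A_R$, there is $\delta>0$ such that $|f|\geq m>0$ on the $\delta$-neighbourhood of $\partial A_R$ intersected with $\overline{A_R}$. Hence the zero set $Z$ of $f$ in $A_R$ is a closed subset of a compact set $K_\delta\subset A_R$. If $f\not\equiv0$ on the connected open set $A_R$ (it cannot vanish identically, since it is nonzero near the boundary), the zeros are isolated. A compact set of isolated points is finite, and each zero has finite order. Note that $A_R$ is connected, being homeomorphic to an annulus by the Schoenflies theorem.

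\emph{Shrinking the contours.} Replace $c_R$ by $c_{R-\varepsilon}$ and $\partial\obs$ by the parallel curve $\gamma_\varepsilon(t)=\gamma(t)+\varepsilon N(\gamma(t))$. For small $\varepsilon$ this is a $C^1$ regular Jordan curve in $A_R$, by the tubular neighbourhood theorem for $C^2$ curves. The homotopies $H(s,t)=f((R-s)e^{it})$ and $H(s,t)=f(\gamma(t)+sN(\gamma(t)))$, for $s\in[0,\varepsilon]$, stay in the region where $|f|\geq m$ once $\varepsilon<\delta$ and $\varepsilon$ is smaller than the tubular radius. By Lemma~\ref{lem:winding-properties}(ii), the winding numbers of $f\circ c_{R-\varepsilon}$ and $f\circ\gamma_\varepsilon$ agree with those of $f\circ c_R$ and $f\circ\gamma$. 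Choose $\varepsilon$ so small that all zeros lie in the region $A_\varepsilon$ bounded by these two curves. This region is a $C^1$ annulus on whose closure $f$ is holomorphic.

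\emph{Argument principle on the smooth annulus.} On $\overline{A_\varepsilon}$, write $f(z)=\prod_j(z-z_j)^{k_j}g(z)$ with $g$ holomorphic and zero-free. Then
\[
\frac{f'}{f}=\sum_j\frac{k_j}{z-z_j}+\frac{g'}{g}.
\]
By Cauchy's theorem on the $C^1$ annulus (Green's theorem applied to a holomorphic function, with the outer curve counterclockwise and the inner curve clockwise), the integral of $g'/g$ over $\partial A_\varepsilon$ vanishes. Each $z_j$ lies inside $c_{R-\varepsilon}$ but outside $\gamma_\varepsilon$. Using Lemma~\ref{lem:winding-properties}(iv), the term $(z-z_j)^{-1}$ therefore contributes $k_j$ from the outer curve and $0$ from the inner one. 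Finally, for a $C^1$ closed curve $\sigma$ on which $f\neq0$,
\[
\frac{1}{2\pi i}\oint_\sigma\frac{f'}{f}\,dz=\frac{1}{2\pi i}\oint_{f\circ\sigma}\frac{dw}{w}=\operatorname{wind}(f\circ\sigma,0),
\]
again by Lemma~\ref{lem:winding-properties}(iv) after a change of variables. Combining these gives \eqref{eq:annular-argument-principle}.

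\emph{Main obstacle.} The main technical point is that $f$ is only continuous up to $\partial A_R$, so contour integrals cannot be taken on the boundary directly. The homotopy step is what handles this. One must check that the parallel curve is Jordan and lies in $A_R$, and that it can be chosen to enclose $\obs$ while excluding every zero. This follows from the $C^2$ regularity of $\partial\obs$, which gives a uniform tubular neighbourhood.
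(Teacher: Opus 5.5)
Your proof is correct and follows the paper's argument essentially step by step. You use the same finiteness argument (a lower bound for $|f|$ near $\partial A_R$ plus the identity theorem), the same shrunken contours $(R-\varepsilon)e^{i\theta}$ and $\gamma+\varepsilon N\circ\gamma$ with homotopy invariance, and the argument principle on the smooth annulus; the only difference is that you derive this last step by factoring out the zeros and applying Cauchy's theorem to $g'/g$, whereas the paper cites the classical result from Ahlfors (note also that Lemma~\ref{lem:winding-properties}(iv) is stated only for Jordan curves, so for $f\circ\sigma$ you are really using its standard general form for any $C^1$ closed curve in $\mathbb C\setminus\{0\}$).
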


\begin{proof}
We apply the argument principle, see
\cite[Chapter~4, Section~5.2]{Ahlfors1979}. Since $f$ is only
assumed holomorphic in $A_R$, we first replace its boundary curves
by nearby curves contained in $A_R$.

Set $m:=\min_{\partial A_R}|f|>0$. By continuity and compactness,
there exists $\delta>0$ such that
\[
  |f(z)|\geq \frac m2
  \qquad
  \textup{if }z\in\overline{A_R}
  \textup{ and }\operatorname{dist}(z,\partial A_R)\leq\delta.
\]
Thus all zeros of $f$ lie in a compact subset of $A_R$.
Since $f$ is not identically zero, the identity theorem implies
that there are only finitely many of them.

Let $N$ be the exterior unit normal to $\partial\obs$ and define
\[
  \gamma_\varepsilon(t)
  :=
  \gamma(t)+\varepsilon N(\gamma(t)),
  \qquad
  c_{R,\varepsilon}(\theta)
  :=
  (R-\varepsilon)e^{i\theta}.
\]
The $C^2$ regularity of $\partial\obs$ ensures that, for sufficiently
small $\varepsilon>0$, these curves bound an annular domain
$A_{R,\varepsilon}\Subset A_R$. We choose $\varepsilon$ small enough
that this domain contains every zero of $f$ and that the deformations
of both boundary curves remain in the region where $f$ does not
vanish. Homotopy invariance then gives
\[
  \operatorname{wind}(f\circ c_{R,\varepsilon},0)
  =
  \operatorname{wind}(f\circ c_R,0),
  \qquad
  \operatorname{wind}(f\circ\gamma_\varepsilon,0)
  =
  \operatorname{wind}(f\circ\gamma,0).
\]

The function $f$ is holomorphic in a neighborhood of
$\overline{A_{R,\varepsilon}}$. We can therefore apply the usual
argument principle on $A_{R,\varepsilon}$. Its outer boundary is
oriented counterclockwise and its inner boundary clockwise, whereas
both parametrizations above are counterclockwise. Hence
\[
  \sum_{\substack{z\in A_{R,\varepsilon}\\ f(z)=0}}
  \operatorname{ord}_z(f)
  =
  \operatorname{wind}(f\circ c_{R,\varepsilon},0)
  -\operatorname{wind}(f\circ\gamma_\varepsilon,0).
\]
Since $A_{R,\varepsilon}$ contains every zero of $f$ in $A_R$,
the preceding identities give
\eqref{eq:annular-argument-principle}.
\end{proof}
The following proposition proves implication \ref{itemii} $\Rightarrow$ \ref{itemi} of Theorem~\ref{thm:main}.

\begin{proposition}\label{prop:irrot-no-stag}
Let $\obs\subset\R^2$ be a compact domain, with $0\in\obs$, whose
boundary is a regular $C^2$ Jordan curve.
Define $\Om:=\R^2\setminus\obs$.
Suppose that
$v\in C^1(\overline\Om;\R^2)$ satisfies
\begin{equation}\label{eq:irrotational-hypotheses}
  \diver v=0,
  \qquad
  \omega:=\partial_1v_2-\partial_2v_1=0
  \qquad\textup{in }\Om,
\end{equation}
and $v\cdot N>0$ on $\partial\obs$, where $N$ is the unit normal
pointing from $\obs$ into $\Om$. Assume that there exists $R_0>0$
such that $\obs\subset B_{R_0}$ and
\begin{equation}\label{eq:outward-sign-argument}
  v(x)\cdot\er(x)>0
  \qquad\textup{for }|x|\geq R_0,
\end{equation}
where $\er(x):=x/|x|$. Then $v(x)\neq0$ for every
$x\in\overline\Om$.
\end{proposition}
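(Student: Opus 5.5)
By \eqref{eq:dbar-q}, the hypotheses \eqref{eq:irrotational-hypotheses} make $q=v_1-iv_2$ holomorphic in $\Om$, and $q\in C(\overline\Om;\mathbb C)$. Zeros of $q$ are exactly zeros of $v$. On $\partial\obs$ we have $v\cdot N>0$, so $v\neq0$ there. On every circle $\partial B_R$ with $R\geq R_0$ we have $v\cdot\er>0$, so again $v\neq0$. We then apply Lemma~\ref{lem:annular-argument-principle} to $f=q$ on $A_R=B_R\setminus\obs$ for any $R\geq R_0$. This gives finitely many zeros in $A_R$, and their total multiplicity equals $\operatorname{wind}(q\circ c_R,0)-\operatorname{wind}(q\circ\gamma,0)$. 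So it suffices to show that these two winding numbers coincide. Since every point of $\Om$ lies in some $A_R$, this excludes zeros in all of $\Om$.

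By Lemma~\ref{lem:winding-properties}(iii), and since $q=\overline{\mathsf v}$, each winding number of $q$ is minus the corresponding degree of $v$. We therefore compute $\deg_{\partial\obs}(v)$ and $\deg_{\partial B_R}(v)$, both with counterclockwise orientation.

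\textbf{Inner boundary.} Here $N$ is the exterior normal of $\obs$, and Lemma~\ref{lem:winding-properties}(v) gives $\deg_{\partial\obs}(N)=1$. Consider the homotopy $H_s=(1-s)N+sv$ on $\partial\obs$. It never vanishes, because $H_s\cdot N=(1-s)+s\,v\cdot N>0$. By Lemma~\ref{lem:winding-properties}(ii), $\deg_{\partial\obs}(v)=1$.

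\textbf{Outer boundary.} On $\partial B_R$ we use the homotopy $(1-s)\er+sv$, which satisfies $\bigl((1-s)\er+sv\bigr)\cdot\er>0$. Hence $\deg_{\partial B_R}(v)=\deg_{\partial B_R}(\er)$. Along $c_R$, the map $\er$ equals $e^{i\theta}$, which has degree $1$; equivalently this is Lemma~\ref{lem:winding-properties}(v) for the circle.

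Both winding numbers of $q$ therefore equal $-1$, and the total multiplicity of zeros in $A_R$ is $0$. Since orders of zeros of a holomorphic function that is not identically zero are positive, $q$ has no zeros in $A_R$. The function $q$ is not identically zero, because it is nonzero on $\partial B_R$.

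The only delicate point is bookkeeping. We must check that the homotopies really fit the hypotheses of Lemma~\ref{lem:winding-properties}(ii), using a fixed parametrization and the interval $s\in[0,1]$ rescaled to $[0,2\pi]$. We must also check the sign and orientation conventions: $N$ is the exterior normal to $\obs$, and both curves are oriented counterclockwise as in Lemma~\ref{lem:annular-argument-principle}. Beyond that, the argument is a direct application of the earlier results.
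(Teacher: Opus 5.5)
Your proposal is correct and follows the paper's proof essentially step by step. You use the holomorphy of $q=v_1-iv_2$ and the two straight-line homotopies to $N$ on $\partial\obs$ and to $\er$ on $\partial B_R$, which give winding number $-1$ for $q$ on both curves; the annular argument principle (Lemma~\ref{lem:annular-argument-principle}) then shows there are no zeros in $A_R$ for every $R\geq R_0$, and $v\cdot N>0$ rules out zeros on $\partial\obs$.
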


\begin{proof}
Set
$q(z):=v_1(z)-iv_2(z)$. The identity
\begin{equation}\label{eq:dbar-q-local}
  2\partial_{\overline z}q
  =
  \diver v-i\omega
\end{equation}
shows that $q$ is holomorphic in $\Om$. It is also continuous on
$\overline\Om$, and
\begin{equation}\label{eq:q-v-zero}
  q(z)=0\quad\Longleftrightarrow\quad v(z)=0.
\end{equation}

Let $\gamma:[0,2\pi]\to\partial\obs$ be a regular counterclockwise
$C^2$ Jordan parametrization. Write $\mathsf v:=v_1+iv_2$, so that
$q=\overline{\mathsf v}$. On $\partial\obs$, consider $
  v_s:=(1-s)v+sN,
\, 0\leq s\leq1.$
$v_s$ is never zero,  since
\[
  v_s\cdot N=(1-s)v\cdot N+s>0.
\]
By Lemma~\ref{lem:winding-properties},
\[
  \operatorname{wind}(\mathsf v\circ\gamma,0)
  =
  \deg_{\partial\obs}(N)=1.
\]
Taking complex conjugates therefore gives
\begin{equation}\label{eq:inner-winding}
  \operatorname{wind}(q\circ\gamma,0)=-1.
\end{equation}

Fix $R\geq R_0$ and parametrize $\partial B_R$ counterclockwise by
$c_R(\theta):=Re^{i\theta}$, where $0\leq\theta\leq2\pi$.
On this circle, consider the homotopy
$
  w_s:=(1-s)v+s\er,
  \, 0\leq s\leq1.
$
Condition~\eqref{eq:outward-sign-argument} gives
\[
  w_s\cdot\er=(1-s)v\cdot\er+s>0,
\]
so this homotopy does not vanish either. The complex representation
of $\er(c_R(\theta))$ is $e^{i\theta}$, whose winding number is $1$.
Thus $\operatorname{wind}(\mathsf v\circ c_R,0)=1,$
and taking complex conjugates yields
\begin{equation}\label{eq:outer-winding}
  \operatorname{wind}(q\circ c_R,0)=-1.
\end{equation}

Set $A_R:=B_R\setminus\obs$. The assumptions on $v$
ensure that $q$ does not vanish on $\partial A_R$.
Applying Lemma~\ref{lem:annular-argument-principle} and using
\eqref{eq:inner-winding} and \eqref{eq:outer-winding}, we obtain
\[
  \sum_{\substack{z\in A_R\\ q(z)=0}}
  \operatorname{ord}_z(q)
  =
  -1-(-1)=0.
\]
Since every zero has positive multiplicity, $q$ has no zeros in
$A_R$. As $R\geq R_0$ is arbitrary, \eqref{eq:q-v-zero} shows
that $v$ does not vanish in $\Om$. Finally, $v\cdot N>0$ excludes
zeros on $\partial\obs$, which completes the proof.
\end{proof}

\section{Flows without stagnation points are irrotational}
\label{sec:proof-main}

We now prove implication \ref{itemi} $\Rightarrow$ \ref{itemii}
of Theorem~\ref{thm:main}. We first study the trajectories and
show that they tend to infinity when the velocity has no zeros.
We then prove that the vorticity is constant along trajectories.
Finally, we construct a tube of trajectories and compare two
estimates for the integral of the vorticity over this tube.

\subsection{Geometry of the streamlines}\label{subsec:dynamics}
Since $v\in C^1(\overline\Om;\mathbb R^2)$, its restriction to $\Om$ is
locally Lipschitz. Therefore, for every $x\in\Om$, there exists a unique
maximal solution
\[
  X(\,\cdot\,,x):(T_-(x),T_+(x))\longrightarrow\Om
\]
of
\begin{equation}\label{eq:flow-ode}
  \frac{\dd}{\dd t}X(t,x)=v(X(t,x)),
  \quad
  X(0,x)=x,
\end{equation}
where $-\infty\leq T_-(x)<0<T_+(x)\leq+\infty$. Standard ODE theory \cite{Perko2001} shows that the set 
\begin{equation}\label{eq:domainflow} 
V=\{(t,x)\in \R\times \R^2\,: x\in \Omega,\, t\in (T_-(x),T_+(x))\}
\end{equation}
is open and that the flow $(t,x)\mapsto X(t,x)$ is $C^1$ on $V$.
For each fixed $x\in\Om$, the trajectory $t\mapsto X(t,x)$ is $C^2$
on $(T_-(x),T_+(x))$. Since \eqref{eq:flow-ode} is autonomous,
uniqueness gives $X(t+s,x)=X(t,X(s,x))$ whenever both sides are defined.
The next lemma proves that $T_+(x)=\infty$ for every $x\in\Om$.

\begin{lemma}
\label{lem:boundary-repulsion}
{
Assume that $v\in C^1(\overline\Om;\R^2)$ satisfies
\eqref{eq:source-boundary} and that there exist
$R_\infty,L_\infty>0$ such that
\[
  |v(x)|\leq \frac{L_\infty}{|x|}
  \qquad\textup{for }|x|\geq R_\infty.
\]
} 
Then, for every $x\in\Om$, the trajectory $X(t,x)$ is defined for every
$t\geq0$ and remains in $\Om$. {Furthermore,} 
there exists $\varepsilon>0$ such that the open set
\begin{equation}\label{eq:boundary-collar}
  U_\varepsilon
  :=
  \left\{
    y\in\Om:0<\dist(y,\obs)<\varepsilon
  \right\}
\end{equation}
has the following properties:
\begin{enumerate}[label=\textup{(\roman*)}]
  \item \label{item:firstitemlemmaboundary-repulsion} If $x\in U_\varepsilon$, then the trajectory exits
  $U_\varepsilon$ in finite positive time.
  \item \label{item:seconditemlemmaboundary-repulsion} After leaving $U_\varepsilon$, the forward trajectory can never re-enter it.
  \item \label{item:thirditemlemmaboundary-repulsion} If $x\notin U_\varepsilon$, then the forward trajectory never enters
  $U_\varepsilon$.
\end{enumerate}
\end{lemma}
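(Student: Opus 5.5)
The plan is to build the collar from the signed distance to $\partial\obs$. Since $\partial\obs$ is a regular $C^2$ Jordan curve, the tubular neighbourhood theorem gives $\varepsilon_1>0$ such that $d(y):=\dist(y,\obs)$ is $C^1$ (in fact $C^2$) on $\{y\in\Om:0<d(y)<\varepsilon_1\}$. On this set $\nabla d(y)=N(\pi(y))$, where $\pi$ is the nearest-point projection onto $\partial\obs$.

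By Remark~\ref{rem:source-condition}, $v\cdot N\geq c_0>0$ on $\partial\obs$. Uniform continuity of $v$ and $N\circ\pi$ on the compact closure of a thin collar then gives $\varepsilon\in(0,\varepsilon_1)$ such that
\[
  \nabla d\cdot v\geq \frac{c_0}{2}
  \qquad\textup{on } \overline{U_\varepsilon}\cap\Om .
\]
Consequently, along any trajectory segment inside $U_\varepsilon$ we have $\frac{\dd}{\dd t}d(X(t,x))\geq c_0/2$.

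From this inequality, items (i)--(iii) are short. For (i), $d(X(t,x))$ increases at rate at least $c_0/2$ while the trajectory stays in $U_\varepsilon$ and it is bounded by $\varepsilon$. The trajectory is also confined to a compact set there, so it cannot blow up before leaving. Hence it exits within time $2\varepsilon/c_0$. For (ii) and (iii), suppose a forward trajectory starts with $d\geq\varepsilon$ and later satisfies $d<\varepsilon$. Let $t_1$ be the last time before entry with $d(X(t_1,x))=\varepsilon$. On $(t_1,t_1+\eta)$ the trajectory lies in $U_\varepsilon$, where $d$ is strictly increasing. This contradicts $d$ dropping below $\varepsilon$ right after $t_1$. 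To make this rigorous near the level set $\{d=\varepsilon\}$, I would take the collar inequality on $\{0<d<\varepsilon_1\}$, which contains a neighbourhood of $\{d=\varepsilon\}$, and argue by continuity. The same monotonicity shows that $d(X(t,x))$ never decreases below its initial value as long as it is less than $\varepsilon$. So the trajectory never approaches $\partial\obs$ and stays in $\Om$.

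For global existence, a maximal forward solution with $T_+(x)<\infty$ must leave every compact subset of $\Om$. By the previous step the trajectory stays at distance at least $\min(d(x),\varepsilon)$ from $\obs$, so it would have to escape to infinity in finite time. Outside $B_{R_\infty}$ we have $|v|\leq L_\infty/|x|\leq L_\infty/R_\infty$, and $v$ is bounded on the compact set $\overline\Om\cap\overline{B_{R_\infty}}$. Hence $|v|$ is bounded on $\Om$, $|X(t,x)-x|\leq \|v\|_\infty t$, and the trajectory cannot reach infinity in finite time. So $T_+(x)=\infty$.

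The main obstacle is the regularity of $d$ near $\partial\obs$ and the careful handling of the boundary level set $\{d=\varepsilon\}$ in (ii) and (iii). I would handle the first with the standard $C^2$ tubular neighbourhood result and the second with the first-exit/last-time argument above.
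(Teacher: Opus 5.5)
Your proposal is correct and follows essentially the paper's argument: the signed distance from the $C^2$ tubular neighbourhood, the uniform bound $v\cdot\nabla d\geq c_0/2$ on a thin collar, monotonicity of $d$ along trajectories for (i)--(iii), and global boundedness of $v$ to rule out finite-time escape to infinity. The differences are cosmetic. You invoke the standard fact that a maximal solution with $T_+(x)<\infty$ leaves every compact subset of $\Om$, whereas the paper extends $X(\cdot,x)$ continuously to $T_+(x)$ and concatenates with a local solution. Your last-crossing argument for (ii)--(iii) already closes via the mean value theorem on $[t_1,t]$, using the inequality only on $\overline{U_\varepsilon}$; the paper instead takes $\varepsilon=\delta/2$ so that the inequality holds on a neighbourhood of $\{d=\varepsilon\}$.
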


\begin{proof}

Let $d$ be the signed distance to $\partial\obs$, chosen positive in $\Om$. By the tubular neighborhood theorem (see \cite[Section~2.2]{AbateTovena2012}), there exists $\delta_0>0$ such that $d$ is $C^2$ on the open set $\{x\in \Omega\,:\,d(x)<\delta_0\}$, and $\nabla d=N$ on $\partial\obs$. 
Compactness of $\partial\obs$ and  \eqref{eq:source-boundary} yield  $\mu:=\min_{\partial\obs}(v\cdot N)>0. $ 
Since $y\mapsto v\cdot\nabla d$ is continuous in $\{0\leq d<\delta_0\}$, we obtain
\begin{equation}\label{eq:distance-along-flow}
  v(y)\cdot\nabla d(y)\geq c_0:=\frac\mu2
  \quad\textup{whenever }0\leq d(y)\leq\delta,
\end{equation}
for some $0<\delta<\delta_0$.
Along any trajectory in $U_\delta$, we have
\begin{equation}\label{eq:monotonicity}
  \frac{\dd}{\dd t}d(X(t,x))
  =v(X(t,x))\cdot\nabla d(X(t,x))\geq c_0.
\end{equation} \medskip
Define $\varepsilon:=\delta/2.$ We now show that $T_+(x)=\infty$. We argue by contradiction. Assume that $T_+(x)<\infty.$ 
Since $v$ is continuous, and due to \eqref{eq:speed-upper},
$v\in L^\infty(\overline\Om)$. For $0\leq s<t<T_+(x)$, we have
$$|X(t,x)-X(s,x)|\leq \int_{s}^t|v(X(\sigma,x))|\dd\sigma \leq \|v\|_{L^\infty(\overline{\Om})}|t-s|.$$
Therefore, $t\mapsto X(t,x)$ is Lipschitz and extends continuously to $[0,T_+(x)]$.
By \eqref{eq:monotonicity}, the distance $d(X(t,x))$ is strictly
increasing whenever $X(t,x)\in U_\delta$. Consequently,
\[
  d(X(t,x))\geq\min\{d(x),\varepsilon\}>0
  \qquad\textup{for }0\leq t<T_+(x).
\]

Since $\obs$ is closed, the limit satisfies $X(T_+(x),x)\in\Om$. Then, there exists $\alpha>0$ such that the following ODE
$$\left\{\begin{array}{l}
c'(t)=v(c(t)),\\
c(0)=X(T_+(x),x)\in \Omega,
\end{array}\right.$$
admits a solution on $t\in(-\alpha,\alpha)$. Therefore, the function
\[
  F(t):=
  \begin{cases}
    X(t,x), & 0\leq t<T_+(x),\\
    c(t-T_+(x)), & T_+(x)\leq t<T_+(x)+\alpha.
  \end{cases}
\]
is a solution of \eqref{eq:flow-ode} that extends $X(t,x)$ beyond $T_+(x)$. This contradicts the maximality of the solution $X(\cdot,x)$. \medskip

We now prove
\ref{item:firstitemlemmaboundary-repulsion}. We argue by contradiction.
If $X(t,x)\in U_\varepsilon$ for every positive time, then
\eqref{eq:monotonicity} implies that
{
\[
  d(X(t,x))\geq d(x)+c_0t.
\]
Consequently, for
\[
  t\geq\frac{\varepsilon-d(x)}{c_0},
\]
we obtain $d(X(t,x))\geq\varepsilon$, which contradicts
$X(t,x)\in U_\varepsilon$. This proves
\ref{item:firstitemlemmaboundary-repulsion}.

We now prove
\ref{item:seconditemlemmaboundary-repulsion}. Let
$x\in U_\varepsilon$, and consider}
{
\[
  t_0
  :=
  \inf\left\{
    t\geq0:
    d(X(t,x))\geq\varepsilon
  \right\}.
\]
By \ref{item:firstitemlemmaboundary-repulsion}, $t_0<\infty$, and
continuity gives
\[
  d(X(t_0,x))=\varepsilon.
\]
Since $\varepsilon=\delta/2$, continuity and
\eqref{eq:monotonicity} imply that there exists $\alpha>0$ such that
\[
  \varepsilon<d(X(t,x))<\delta
  \qquad\textup{for }t\in(t_0,t_0+\alpha).
\]
Define the first possible return time by
\[
  \tau
  :=
  \inf\left\{
    t>t_0+\frac{\alpha}{2}:
    d(X(t,x))\leq\varepsilon
  \right\}.
\]
The preceding inequality implies that
$\tau>t_0+\alpha/2$. Suppose that $\tau<\infty$. By continuity of
$X(t,x)$ and $d$,
\[
  d(X(\tau,x))=\varepsilon.
\]
Moreover, there exists $\beta>0$ such that
\[
  d(X(t,x))\in(\varepsilon,\delta)
  \qquad\textup{for every }t\in(\tau-\beta,\tau).
\]
However, this is impossible, because
\eqref{eq:monotonicity} implies that
$t\mapsto d(X(t,x))$ is strictly increasing on
$(\tau-\beta,\tau]$. Therefore $\tau$ cannot be finite. We conclude that
\[
  d(X(t,x))>\varepsilon
  \qquad\textup{for every }t>t_0.
\]
This proves
\ref{item:seconditemlemmaboundary-repulsion}. \medskip

Finally, suppose that $x\notin U_\varepsilon$. Then
$d(x)\geq\varepsilon$. If $d(x)=\varepsilon$,
\eqref{eq:monotonicity} implies that $d(X(t,x))>\varepsilon$ for all
sufficiently small $t>0$. If the trajectory entered $U_\varepsilon$ at a
later time, there would be a first time $\tau>0$ at which it crossed the
level $d=\varepsilon$ from above. For $t<\tau$ sufficiently close to
$\tau$, one would have
$\varepsilon<d(X(t,x))<\delta$, whereas
\eqref{eq:monotonicity} would make $t\mapsto d(X(t,x))$ strictly
increasing there. This is impossible. Thus the trajectory never enters
$U_\varepsilon$, proving
\ref{item:thirditemlemmaboundary-repulsion}.}
\end{proof}

\begin{remark}
The bound $|v(x)|\leq L_\infty/|x|$ outside a large disk is used in this
lemma only to deduce that $v$ is globally bounded. More generally, forward completeness follows under
any growth condition that prevents trajectories from reaching infinity in
finite time, such as $|v(x)|\leq C(1+|x|).$
\end{remark}

We use the following elementary consequence of the degree.

\begin{lemma}
\label{lem:index}
Let $D\subset\mathbb R^2$ be a bounded Jordan domain whose boundary
$\partial D$ is a regular $C^2$ Jordan curve. Let $w\in C^0(\overline D;\mathbb R^2).$
Assume that $w$ is tangent and nonzero on $\partial D$. Then there exists
$x_*\in D$ such that $w(x_*)=0.$
\end{lemma}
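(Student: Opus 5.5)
We argue by contradiction, using the degree machinery recalled in Definition~\ref{def:circle-degree}, Proposition~\ref{prop:degree-properties} and Lemma~\ref{lem:winding-properties}. Suppose $w$ has no zeros in $\overline D$; since $w\neq0$ on $\partial D$ by hypothesis, $w$ is then a continuous nonvanishing field on $\overline D$. Orient $\partial D$ counterclockwise and fix an orientation-preserving parametrization $\gamma:[0,2\pi]\to\partial D$. We compute $\deg_{\partial D}(w)$ in two ways and obtain two different values.

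\emph{First computation: the degree is $0$.} By the Schoenflies theorem there is a homeomorphism $h:\overline{B_1}\to\overline D$, and composing with a reflection if needed we may assume $h(e^{it})$ traverses $\partial D$ counterclockwise. The map
\[
  H(s,t):=\mathsf w\bigl(h(s' e^{it})\bigr),\qquad s'=1-\tfrac{s}{2\pi},
\]
is continuous, nonvanishing and $2\pi$-periodic in $t$. By Lemma~\ref{lem:winding-properties}(ii) its winding number is independent of $s$. At $s=2\pi$ the curve is constant, so the winding number is $0$. At $s=0$ it equals $\deg_{\partial D}(w)$, by Lemma~\ref{lem:winding-properties}(i) applied to the orientation-preserving reparametrization $t\mapsto h(e^{it})$. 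Hence $\deg_{\partial D}(w)=0$.

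\emph{Second computation: the degree is $\pm1$.} Let $T$ be the counterclockwise unit tangent of $\partial D$ and $N$ the exterior unit normal, so that $T=N^\perp$ (or $-N^\perp$, depending on convention; this only affects a sign). On $\partial D$ we have $w=\lambda T$ with $\lambda$ continuous and never zero. Since $\partial D$ is connected, $\lambda$ has a constant sign $\sigma\in\{\pm1\}$. The homotopy $(1-s)w+s\sigma T$ is then nonvanishing, because its component along $\sigma T$ equals $(1-s)|\lambda|+s>0$. Lemma~\ref{lem:winding-properties}(ii) therefore gives $\deg_{\partial D}(w)=\deg_{\partial D}(\sigma T)$. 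In complex notation $\sigma T=\pm iN$, and multiplication by the constant $\pm i$ does not change the winding number. Lemma~\ref{lem:winding-properties}(v) then yields $\deg_{\partial D}(w)=\deg_{\partial D}(N)=1$. This contradicts the first computation, so $w$ must vanish somewhere in $\overline D$. Since $w\neq0$ on $\partial D$, the zero lies in $D$.

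\emph{Main obstacle.} The delicate step is the first computation: we must check that the Schoenflies homeomorphism can be chosen so that its boundary restriction is orientation-preserving, and that $\deg_{\partial D}$ computed with it agrees with the value computed from a $C^2$ parametrization. Lemma~\ref{lem:winding-properties}(i) handles the second point once orientation is fixed, and a reflection settles the first. Alternatively, we could avoid Schoenflies by noting that $\overline D$ is contractible as a closed Jordan domain, but the homeomorphism gives the cleanest homotopy.
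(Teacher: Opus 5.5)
Your proof is correct and follows the same route as the paper. You argue by contradiction and get $\deg_{\partial D}(w)=0$ from the contractibility of $\overline D$, which you make explicit through the Schoenflies homeomorphism, and $\deg_{\partial D}(w)=1$ from $w=\lambda T$ with $\lambda$ of constant sign. The remaining differences are cosmetic: you reduce $\deg_{\partial D}(T)=1$ to Lemma~\ref{lem:winding-properties}(v) via $T=iN$ instead of citing the turning-tangent theorem, and the orientation issue you flag in the first computation is moot, since reversing orientation only changes the sign of a winding number equal to $0$.
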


\begin{proof}
Suppose, toward a contradiction, that $w$ has no zero in $\overline D$.
Then
\[
  F:=\frac{w}{|w|}:\overline D\longrightarrow\mathbb S^1
\]
is continuous. Since a Jordan domain is homeomorphic to a disk, it is contractible. Therefore, the
restriction restriction $F|_{\partial D}$ is homotopic to a constant map, and $\deg_{\partial D}(F|_{\partial D})=0$.

Choose the positive orientation on $\partial D$ and let $T$ be its unit
tangent. Tangency and nonvanishing imply
\[
  F|_{\partial D}=\sigma T,
\]
where the continuous function $\sigma:\partial D\to\{-1,1\}$ is constant
because $\partial D$ is connected. The turning-tangent theorem for regular
$C^1$ Jordan curves gives
\[
  \deg_{\partial D}(T)=1,
\]
see~\cite[Section~2.4]{AbateTovena2012}. Multiplication by $-1$ is a rotation
of $\mathbb S^1$ and does not change the degree. Hence
\[
  \deg(F|_{\partial D})=1,
\]
contradicting the extension of $F$ to $\overline D$. Therefore $w$ has a
zero in $D$.
\end{proof}

We next exclude periodic streamlines. We call $\Gamma\subset\Om$ a periodic
orbit of \eqref{eq:flow-ode} if there exist $x_0\in\Om$ and $T>0$ such that
$X(T,x_0)=x_0$ and the trajectory $t\mapsto X(t,x_0)$ is not constant. Its
image is then
\[
  \Gamma:=\{X(t,x_0):0\leq t\leq T\}.
\]

\begin{lemma}\label{lem:no-periodic}
Assume that $v\in C^1(\overline\Om;\mathbb R^2)$ is divergence free in
$\Om$, is nonzero on $\overline\Om$, and satisfies the strict source
condition \eqref{eq:source-boundary}. Then the flow generated by $v$ has no
periodic orbits in $\Om$.
\end{lemma}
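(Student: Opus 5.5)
We argue by contradiction. Suppose $\Gamma\subset\Om$ is a periodic orbit through $x_0$ with minimal period $T>0$. Since $v$ is $C^1$ and nonvanishing along $\Gamma$, uniqueness for \eqref{eq:flow-ode} makes $t\mapsto X(t,x_0)$ injective on $[0,T)$. Hence $\Gamma$ is a $C^2$ regular Jordan curve, regular because $|\dot X|=|v|>0$, and $v$ is tangent and nonzero on it. Let $D$ be the bounded component of $\R^2\setminus\Gamma$. The proof splits according to whether $\obs\subset D$.

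\emph{Case 1: $\obs\not\subset D$.} Since $\partial\obs$ is connected and disjoint from $\Gamma$, it lies entirely in $D$ or entirely outside $\overline D$. If it lies outside $\overline D$, then $\obs$ is contained in the unbounded component. Indeed, $\obs$ is the bounded Jordan region of $\partial\obs$, and if it met $D$ then $\Gamma$, which is connected and disjoint from $\partial\obs$, would lie inside $\obs$, which is impossible since $\Gamma\subset\Om$. So $\overline D\subset\Om$, and $v$ is continuous on $\overline D$, tangent and nonzero on $\partial D=\Gamma$. Lemma~\ref{lem:index} then gives a zero of $v$ in $D$, contradicting the assumption that $v$ does not vanish on $\overline\Om$.

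\emph{Case 2: $\obs\subset D$.} Here we use incompressibility and the flux. Set $W:=D\setminus\obs$, a region bounded by $\Gamma$ and $\partial\obs$, both regular curves. Applying the divergence theorem on $W$ with $\diver v=0$, the flux through $\Gamma$ vanishes because $v$ is tangent to it, and the outward normal of $W$ on $\partial\obs$ is $-N$. This gives
\[
0=\int_W\diver v\dd x=-\int_{\partial\obs}v\cdot N\dd\ell<0,
\]
by \eqref{eq:source-boundary}, a contradiction.

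The main technical point is justifying the divergence theorem on $W$. The boundary $\Gamma$ is only $C^2$, but $v\in C^1(\overline W)$, since $\overline W\subset\overline\Om$ is compact, so the classical Gauss–Green theorem on a domain with $C^1$ boundary applies directly. If care is needed, one can alternatively shrink slightly inside $W$ using Lemma~\ref{lem:boundary-repulsion}, or simply note that the $C^1$ regularity up to $\partial\obs$ suffices. A secondary point is the topological claim in Case 1 that $\obs$ lies in the unbounded component of $\R^2\setminus\Gamma$, which follows from the Jordan curve theorem together with connectedness as described.
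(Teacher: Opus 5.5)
Your proposal is correct and follows the paper's proof. The case split is the same, according to which component of $\mathbb R^2\setminus\Gamma$ contains $\obs$. In both arguments Lemma~\ref{lem:index} rules out a contractible orbit, and the divergence theorem on $D\setminus\obs$ together with the positive flux through $\partial\obs$ rules out an orbit enclosing the obstacle.

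The only difference is cosmetic. The paper gets the dichotomy directly: $\obs$ is connected and disjoint from $\Gamma$, so $\obs\subset D$ or $\obs$ lies in the unbounded component. That is cleaner than your detour through $\partial\obs$, whose Case~1 never explicitly rules out the subcase $\partial\obs\subset D$ (it is impossible, since $\partial\obs\subset D$ forces $\obs\subset D$).
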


\begin{proof}
We argue by contradiction. Suppose that there exists a nonconstant periodic
trajectory through some $x_0\in\Om$. Choose its least positive period $T$.
Since $v$ never vanishes,
standard uniqueness for autonomous ODEs shows that
\[
  \gamma(t):=X(t,x_0),\quad 0\leq t< {\infty},
\]
is injective on $[0,T)$, {periodic with period $T$} and satisfies $\gamma'(t)=v(\gamma(t))\neq0$.
Moreover, $v\in C^1$ implies $\gamma\in C^2$. Hence
$\Gamma:=\gamma([0,T])$ is a regular $C^2$ Jordan curve.

By the Jordan curve theorem, $\R^2\setminus\Gamma$ can be written as the union of two connected components, $\mathbb R^2\setminus\Gamma = D_\Gamma\cup E_\Gamma,$ where $D_\Gamma$ is bounded and $E_\Gamma$ is the unbounded component.

Since $\Gamma\subset\Om$, we have $\Gamma\cap\obs=\varnothing.$
The set $\obs$ is connected, and therefore it must lie entirely in
one of the two connected components of $\mathbb R^2\setminus\Gamma$. Thus
exactly one of the following alternatives holds:
\begin{enumerate}[label=\textup{(\roman*)}]
  \item $\obs\subset E_\Gamma$.
  \item $\obs\subset D_\Gamma$.
\end{enumerate}

\medskip
\noindent
\underline{\emph{Case 1: $\obs\subset E_\Gamma$.}}
In this case, $\overline{D_\Gamma}\cap\obs=\varnothing,$
and hence $\overline{D_\Gamma}\subset\Om.$
The vector field $v$ is continuous on $\overline{D_\Gamma}$, is nonzero
there, and is tangent and nonzero on $\Gamma=\partial D_\Gamma$, because $v(\gamma(t))=\gamma'(t).$
Applying Lemma \ref{lem:index} to $D_\Gamma$ and
$w=v|_{\overline{D_\Gamma}}$, we obtain a point
$x_*\in D_\Gamma$ such that $v(x_*)=0.$
This contradicts the assumption that $v$ is nonzero in
$\overline\Om$.

\medskip
\noindent
\underline{\emph{Case 2: $\obs\subset D_\Gamma$.}}
Consider the bounded annular domain
$A_\Gamma:=D_\Gamma\setminus\obs.$
 Its boundary has two components, $\Gamma$ and $\partial\obs$.  
Let $n_\Gamma$ denote the outward unit normal to
$D_\Gamma$ along $\Gamma$. The outward unit normal to $A_\Gamma$ is
$n_\Gamma$ on $\Gamma$ and $-N$ on $\partial\obs$.

The divergence theorem and $\diver v=0$ give
\begin{align}
  0=
  \int_{A_\Gamma}\diver v \dd x =
  \int_\Gamma v\cdot n_\Gamma \dd s 
  -
  \int_{\partial\obs}v\cdot N \dd s .
  \label{eq:periodic-orbit-flux}
\end{align}
Since $v$ is tangent to $\Gamma$, the first boundary integral vanishes,
whereas 
\eqref{eq:source-boundary} makes the second strictly positive. This
contradicts \eqref{eq:periodic-orbit-flux}.

Both cases, 1 and 2, lead to a contradiction. Thus the
flow has no periodic orbits in $\Om$.
\end{proof}

\begin{remark}
The two cases use two different types of arguments. A contractible periodic orbit is
excluded by the planar index of a tangent vector field. A periodic orbit
surrounding the obstacle is excluded by incompressibility and the  positive
flux through the obstacle boundary.
\end{remark}

\begin{lemma}
\label{lem:escape}
Assume $v\in C^1(\overline\Om;\mathbb R^2)$, $\diver v=0$ in $\Om$,
\eqref{eq:source-boundary} and \eqref{eq:far-field-cost}. Suppose that
$v(x)\neq0$ for every $x\in\overline\Om$. Then, for every $x\in\Om$,
\begin{equation}\label{eq:escape}
  \lim_{t\to\infty}|X(t,x)|=\infty.
\end{equation}
Moreover, there exists $\overline R>0$ such that, for every $x\in\Om$, there is a time $\overline t=\overline t(x)\geq0$ for which $t\mapsto|X(t,x)|$ is strictly increasing on $[\overline t,\infty)$ and $v_r(X(t,x))>0$ for $t\geq\overline t$.
For every $R>\max\{\overline R,|x|\}$, this part of the trajectory crosses $\partial B_R$ exactly once.

\end{lemma}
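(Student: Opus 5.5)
The plan is to combine the forward completeness from Lemma~\ref{lem:boundary-repulsion}, the absence of periodic orbits from Lemma~\ref{lem:no-periodic}, and the Poincar\'e--Bendixson theorem to show that forward trajectories cannot stay bounded. Once a trajectory reaches $\{|x|\geq R_0\}$, the sign condition $v_r>0$ in \eqref{eq:far-field-cost} will make $|X(t,x)|$ increasing. We take $\overline R:=R_0$.

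First, by Lemma~\ref{lem:boundary-repulsion} (applicable since \eqref{eq:speed-upper} holds), $X(t,x)$ is defined for all $t\geq0$ and stays in $\Om$. Moreover, after some finite time it stays outside $U_\varepsilon$, so it remains at distance at least $\varepsilon$ from $\obs$.

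Next, note that $\frac{\dd}{\dd t}|X|=v_r(X)$ whenever $X\neq0$. Hence, on $\{|x|\geq R_0\}$, the function $|X(t,x)|$ is strictly increasing. In particular, once $|X(t_1,x)|\geq R_0$, the trajectory can never return to $\{|x|<R_0\}$. Indeed, at a first return time $\tau$ to the level $R_0$ we would have $|X|>R_0$ just before $\tau$ and $|X(\tau)|=R_0$, which is impossible because $|X|$ is increasing near $\tau$. Since $v_r>0$ on $|x|=R_0$, the same computation gives $|X|>R_0$ for all $t>t_1$.

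The main step is to show that every trajectory eventually satisfies $|X(t,x)|\geq R_0$. Suppose instead that $|X(t,x)|<R_0$ for all $t\geq0$. Then, for large $t$, the trajectory lies in the compact set
\[
K:=\{y\in\Om:\ d(y)\geq\varepsilon,\ |y|\leq R_0\}\subset\Om.
\]
Its $\omega$-limit set is therefore nonempty, compact and contained in $K$, and $v$ has no zeros there. The Poincar\'e--Bendixson theorem, applied to the planar $C^1$ field $v$ on the open set $\Om$, then says that the $\omega$-limit set is a periodic orbit. This contradicts Lemma~\ref{lem:no-periodic}.

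Poincar\'e--Bendixson is usually stated in $\R^2$ or on an open planar set. Our case fits, since we may take $v$ on the open set $\Om$ and the trajectory is confined to a compact subset of $\Om$. This is the point I expect to require the most care: the citation must handle a domain that is not simply connected. The standard proof via transversal sections works in any open planar region, so this should be fine. As an alternative, I would try a divergence-free argument: the flow preserves area, so a bounded forward orbit is recurrent, and a recurrent nonperiodic orbit in the plane is impossible by the transversal-section argument.

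It remains to conclude. Let $\overline t$ be the first time with $|X(\overline t,x)|\geq R_0$, or $\overline t=0$ if $|x|\geq R_0$. On $[\overline t,\infty)$ we have $v_r(X)>0$ and $|X|$ strictly increasing, so the limit $\ell:=\lim_{t\to\infty}|X(t,x)|$ exists in $(R_0,\infty]$. If $\ell<\infty$, the trajectory stays in the compact annulus $\{R_0\leq|y|\leq\ell\}$, and Poincar\'e--Bendixson again gives a contradiction. Alternatively, $v_r\geq c>0$ on this annulus by continuity and compactness, so $|X|\geq R_0+c(t-\overline t)$, which is impossible. Hence $\ell=\infty$, which is \eqref{eq:escape}.

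Finally, take $R>\max\{\overline R,|x|\}$. Since $|X(t,x)|$ is continuous and strictly increasing on $[\overline t,\infty)$, goes to infinity, and starts below $R$, it takes the value $R$ exactly once on that interval. The starting value is below $R$ because $|X(\overline t,x)|=\max\{R_0,|x|\}<R$. Therefore this part of the trajectory crosses $\partial B_R$ exactly once.
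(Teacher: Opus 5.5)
Your proof is correct and follows essentially the same route as the paper. It uses forward completeness and confinement away from $\obs$ via Lemma~\ref{lem:boundary-repulsion}, then Poincar\'e--Bendixson with Lemma~\ref{lem:no-periodic} to exclude trajectories trapped in a compact subset of $\Om$, and finally radial monotonicity from $v_r>0$ on $|x|\geq R_0$ to get escape and the single crossing. Perko's Poincar\'e--Bendixson theorem is stated for arbitrary open planar sets, so the non-simply-connected domain is harmless. The only difference is bookkeeping: the paper applies Poincar\'e--Bendixson once to get $\sup_{t\geq0}|X(t,x)|=\infty$ and lets monotonicity finish, whereas you use it to reach $\partial B_{R_0}$ and then rule out a finite limit with the elementary bound $v_r\geq c>0$ on a compact annulus, which is equally valid.
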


\begin{proof}
Fix $x\in\Om$ and write $X(t):=X(t,x)$. By
\eqref{eq:speed-upper} and Lemma \ref{lem:boundary-repulsion}, the trajectory is
defined for every $t\geq0$ and remains in $\Om$.

Suppose that the forward orbit were bounded. Lemma
\ref{lem:boundary-repulsion} then places a positive tail of the orbit in a
compact subset of $\Om$. Its $\omega$-limit set is nonempty, compact and contains
no equilibrium. The Poincar\'e--Bendixson theorem implies that its $\omega$-limit set is a periodic orbit. See Perko~\cite[Section~3.7, Theorem~1]{Perko2001}. This contradicts Lemma
\ref{lem:no-periodic}. Hence
\begin{equation}\label{eq:unbounded-forward-orbit}
  \sup_{t\geq0}|X(t,x)|=\infty.
\end{equation}
Set $\overline R:=R_0$, where $R_0$ is as in \eqref{eq:far-field-cost}, and $\widetilde R:=\max\{\overline R,|x|\}$. Define
\[
  {\overline{t}}:=\inf\{t\geq0:|X(t)|={\widetilde{R}}\}.
\]
This time is finite by \eqref{eq:unbounded-forward-orbit}. Whenever
$|X(t)|\geq \overline{R}$, condition \eqref{eq:far-field-cost} gives
\begin{equation}\label{eq:positive-radial-derivative}
  \frac{\dd}{\dd t}|X(t)|
  =v(X(t))\cdot\er(X(t))
  =v_r(X(t))>0.
\end{equation}
The trajectory cannot cross $\partial B_{\widetilde R}$ inwards,
because its radial derivative is positive there. Hence
$|X(t)|\geq\widetilde R$ for $t\geq\overline t$, and
$t\mapsto|X(t)|$ is strictly increasing on this interval.
By \eqref{eq:unbounded-forward-orbit}, it follows that
$|X(t)|\to\infty$ as $t\to\infty$. Therefore, since $|X(\overline{t})|=\widetilde{R}$, by continuity, for every $R>\widetilde{R}$, there exists $t>\overline{t}$ such that $|X(t)|=R$, so the curve $(\overline{t},\infty)\ni t\mapsto X(t,x)$ crosses every circle $\{|x|=R\}$ for $R>\widetilde{R}$. Furthermore, as $|X(t)|$ is strictly increasing, it does so only once. Finally, the condition on $\partial_t X(t,x)\cdot e_r(X(t,x))$ follows immediately from \eqref{eq:positive-radial-derivative}. 
\end{proof}

\subsection{Transport of vorticity}\label{subsec:vorticity-transport}

We next prove that the vorticity is constant along trajectories.
Since $v$ is only assumed to be $C^1$, the vorticity is continuous
and the transport equation must first be understood in the
sense of distributions.

\begin{lemma}\label{lem:vorticity-transport}
Assume  \eqref{eq:euler-intro} and \eqref{eq:regularity} and that $v(x)\neq0$ for every $x\in\Om$. Then, for every
$x\in\Om$,
\begin{equation}\label{eq:omega-flow}
  \omega(X(t,x))=\omega(x),
\end{equation}
for every time $t$ for which the trajectory $X(t,x)$ is defined.
\end{lemma}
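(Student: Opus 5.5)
Since $v$ is only $C^1$, the vorticity $\omega$ is merely continuous. The plan is therefore to use the Bernoulli identity $\nabla\B=-\omega v^\perp$ from \eqref{eq:euler-identities}, which holds pointwise, rather than differentiating $\omega$ along trajectories. The idea is to straighten the flow locally: near any point $x_0\in\Om$ with $v(x_0)\neq0$, choose flow-box coordinates adapted to $v$. In these coordinates the weak transport equation $\diver(\omega v)=0$ says that $\omega$ times a Jacobian factor is independent of the time variable.

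\textbf{Step 1: local constancy.} Fix $x_0\in\Om$ and a short segment $\Sigma$ through $x_0$ transversal to $v(x_0)$, parametrized by $s\in(-\eta,\eta)$. Define $\Psi(t,s):=X(t,\sigma(s))$ for $|t|<\tau$. Since the flow is $C^1$ on the open set $V$ and $\partial_t\Psi=v\neq0$ is transversal to $\partial_s\Psi$ at $(0,0)$, the inverse function theorem makes $\Psi$ a $C^1$ diffeomorphism onto a neighbourhood $W$ of $x_0$. Let $J(t,s):=\det(\partial_t\Psi,\partial_s\Psi)$.

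Liouville's formula with $\diver v=0$ gives $\partial_tJ=(\diver v)J=0$. This can be justified for $C^1$ fields via the variational equation, because $\partial_s\Psi$ solves $\partial_t(\partial_s\Psi)=Dv\,\partial_s\Psi$ and $\partial_t\Psi=v(\Psi)$ solves the same linear equation. Hence $J(t,s)=J(0,s)\neq0$.

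Now pull back $\diver(\omega v)=0$ against test functions $\varphi\in C_c^\infty(W)$. Using $\omega v\cdot\nabla\varphi\circ\Psi\,|J|=\tilde\omega\,\partial_t(\varphi\circ\Psi)\,|J|$, we obtain
\[
\int\!\!\int \tilde\omega(t,s)\,\partial_t\phi(t,s)\,|J(0,s)|\dd t\dd s=0,\qquad \tilde\omega:=\omega\circ\Psi,
\]
for all $\phi\in C^1_c$ (by density). Since $|J(0,s)|>0$ and is independent of $t$, this shows that $\partial_t\tilde\omega=0$ in distributions. Because $\tilde\omega$ is continuous, $\tilde\omega(t,s)=\tilde\omega(0,s)$. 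In particular $\omega(X(t,x_0))=\omega(x_0)$ for $|t|<\tau$.

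\emph{Alternative for Step 1.} We could avoid Liouville by using Bernoulli instead: $\partial_s(\B\circ\Psi)=-\omega\, v^\perp\cdot\partial_s\Psi=\pm\omega J$ and $\partial_t(\B\circ\Psi)=-\omega v^\perp\cdot v=0$. Thus $\B\circ\Psi=g(s)$, so $\omega J=\pm g'(s)$ with $J=J(0,s)$, which again gives independence of $t$. This route only uses $\B\in C^1$ and the $C^1$ flow; I would present whichever is cleaner.

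\textbf{Step 2: globalize along the trajectory.} Let $I$ be the maximal interval of existence for $X(\cdot,x)$ and let $S:=\{t\in I:\omega(X(t,x))=\omega(x)\}$. $S$ is closed in $I$ by continuity of $\omega$ and of the trajectory. It is open by Step 1 applied at $x_0=X(t_0,x)$, together with the group property $X(t_0+t,x)=X(t,X(t_0,x))$. Since $I$ is connected and $0\in S$, we get $S=I$.

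\textbf{Main obstacle.} The delicate point is the change of variables with only $C^1$ regularity. One must check that $\Psi$ is $C^1$ with $C^1$ inverse, that $\partial_t\partial_s\Psi$ exists and commutes (guaranteed by the variational equation), and that the test-function class $\{\varphi\circ\Psi\}$ is rich enough to conclude $\partial_t\tilde\omega=0$. I expect the Bernoulli formulation to sidestep most of this, since it needs only first derivatives of $\B$ and $\Psi$ and the chain rule.
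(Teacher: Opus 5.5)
Your proposal is correct and follows the paper's proof essentially step for step: flow-box coordinates $X(t,\sigma(s))$, Liouville's formula with $\diver v=0$ to make $J$ independent of $t$, the pulled-back weak transport equation giving $\partial_t(\omega\circ\Psi)=0$ in $\mathcal D'$, continuity of $\omega$, and a continuation argument. Your open--closed version on the whole maximal interval also covers negative times, which the paper's forward-only supremum argument leaves implicit.

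One small correction to your aside: the Bernoulli variant does not avoid Liouville. The identity $\partial_s(\B\circ\Psi)=-\omega J$ only shows that $\omega J$ is independent of $t$, and you still need $J(t,s)=J(0,s)$, that is, incompressibility via Liouville, to conclude. What that variant does avoid is the distributional change of variables.
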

\begin{proof}
Since $v\in C^1(\overline\Om;\mathbb R^2)$, we have
$\omega\in C^0(\overline\Om)$. The Euler equations give
\[
  -\nabla p=(v\cdot\nabla)v
  =
  \nabla\left(\frac12|v|^2\right)+\omega v^\perp.
\]
Using this identity and integrating by parts, we obtain
\begin{equation}\label{eq:distributional-vorticity-transport}
\begin{split}
  \int_{\Omega}(v\cdot\nabla\varphi)\omega
   \dd x 
  &=
  \int_{\Omega}
  (v^\perp\cdot\nabla^\perp\varphi)\omega
   \dd x \\
  &=
  -\int_{\Omega}(\nabla^\perp\varphi)\cdot
  \nabla\left(p+\frac12|v|^2\right)
   \dd x \\
  &=
  \int_{\Omega}
  \diver(\nabla^\perp\varphi)
  \left(p+\frac12|v|^2\right)
   \dd x 
  =0,
  \quad
  \textup{for every }\varphi\in C_c^2(\Omega).
\end{split}
\end{equation}

A standard approximation argument allows us to
extend \eqref{eq:distributional-vorticity-transport} to any
$\varphi\in C_c^1(\Omega)$. Therefore, $\omega$ satisfies the
transport equation
\[
  (v\cdot\nabla)\omega=0
\]
in a distributional sense.
We now use local flow coordinates to prove that
$\omega$ is constant along the field lines of $v$.
If $v(x)=0$, uniqueness for the ODE gives $X(t,x)=x$, and
the conclusion is straightforward. We now consider a point
$x_0\in\Om$ such that $v(x_0)\neq0$. Define the curve $\sigma(s)$ by

\[
  \left\{
  \begin{array}{l}
    \sigma'(s)=v^\perp(\sigma(s)),\\
    \sigma(0)=x_0.
  \end{array}
  \right.
\]

Since $v\in C^1(\Omega;\R^2)$, for every $x_0\in\Omega$
there exists a $\delta>0$ such that $\sigma(s)$ is defined on
$(-\delta,\delta)$. We recall that the set $V$ in \eqref{eq:domainflow} is an open
set, and that the function
$\R\times(-\delta,\delta)\ni(t,s)\mapsto(t,\sigma(s))$
is continuous. Therefore, the set
\[
  W_{x_0}
  :=
  \{(t,s)\in\R\times\R:
    s\in(-\delta,\delta),\ (t,\sigma(s))\in V\}
\]
is open, so there exists a rectangle
$Q_{x_0}:=(-a,a)\times(-a,a)\subset W_{x_0}$.
We now define the function
\[
  F(t,s):=X(t,\sigma(s)).
\]
The map $F:Q_{x_0}\to F(Q_{x_0})$ is in
$C^1(Q_{x_0};\R^2)$ and its Jacobian matrix at $(0,0)$ is given by
\[
 DF(0,0)= \begin{pmatrix}
    v_1(x_0)&-v_2(x_0)\\
    v_2(x_0)&v_1(x_0)
  \end{pmatrix}.
\]

Thus, its Jacobian determinant, $J$, equals
$J(0,0)=|v(x_0)|^2>0$. Therefore, by taking $Q_{x_0}$
smaller, the map $F:Q_{x_0}\longrightarrow F(Q_{x_0})$
is a $C^1$ diffeomorphism.

Furthermore, $\partial_tF(t,s)=v(F(t,s))$. Therefore,
$\partial_tF(t,s)$ is also in $C^1(Q_{x_0};\R^2)$.
Thus, its derivatives satisfy
\[
  \frac{\partial}{\partial t}\frac{\partial F}{\partial t}
  =
  \nabla v(F(t,s))\cdot\frac{\partial F}{\partial t}
  \quad\textup{and}\quad
  \frac{\partial}{\partial t}\frac{\partial F}{\partial s}
  =
  \nabla v(F(t,s))\cdot\frac{\partial F}{\partial s}.
\]
 
Liouville's formula (see \cite[Section~3.5]{Perko2001}) gives
\[
  \partial_tJ
  =
  \operatorname{tr}\big(\nabla v(F(t,s))\big)J
  =
  J\big((\diver v)\circ F\big)=0.
\]
Thus $J(t,s)=J(0,s)$.

Let $\varphi\in C_c^\infty(Q_{x_0})$ and set
$\psi:=\varphi\circ F^{-1}$. The function $\psi$, extended by
zero, is an admissible $C_c^1(\Om)$ test function in
\eqref{eq:distributional-vorticity-transport}.
Since $\partial_tF=v\circ F$, changing variables gives
\[
  0
  =
  \int_{F(Q_{x_0})}
  \omega(x)\big(v\cdot\nabla\psi(x)\big)
   \dd x 
  =
  \int_{Q_{x_0}}
  \omega(F(t,s))J(0,s)\partial_t\varphi(t,s)
  {\dd t}{\dd s}
\]
for any $\varphi\in C_c^\infty(Q_{x_0})$.
Therefore,
\[
  \partial_t\bigl(J\,\omega\circ F\bigr)=0
  \quad\textup{in }\mathcal D'(Q_{x_0}).
\]
Since $J$ is positive and independent of $t$, it follows that
$\omega\circ F$ is independent of $t$ almost everywhere.
The continuity of $\omega\circ F$ upgrades this identity to
every $(t,s)\in Q_{x_0}$. Therefore, $\omega(X(t,x_0))=\omega(x_0)$ whenever
$(t,0)\in Q_{x_0}$. \medskip

We now show that
$\omega(X(t,x))=\omega(x)$ for every $t>0$.
To do so, consider the time
\[
  t_\star
  :=
  \sup\{t>0:
    \omega(X(s,x))=\omega(x)
    \textup{ for every }s\leq t\}.
\]

We just saw that $t_\star>0$. We then show that
$t_\star=\infty$. If it was not the case, by continuity of
$\omega$ and $X(t,x)$,
$\omega(X(t_\star,x))=\omega(x)$.
Finally, we can repeat the argument above taking
$x_0=X(t_\star,x)$, so there exists some $\alpha>0$ such that
$\omega(X(\sigma,x_0))=\omega(x_0)$ for any
$\sigma\in(0,\alpha)$. Since
\[
  X(\sigma,x_0)
  =
  X(\sigma,X(t_\star,x))
  =
  X(\sigma+t_\star,x),
\]
and $\omega(x_0)=\omega(x)$, we conclude that $t_\star$
cannot be maximal, and this is a contradiction.

\end{proof}

\subsection{Completion of the proof}\label{subsec:proof-main}

We now prove implication \ref{itemi} $\Rightarrow$ \ref{itemii}
of Theorem~\ref{thm:main}, using the results of the preceding
two subsections. We divide the proof into several steps.\medskip

Assume that $(v,p)$ satisfies the hypotheses of
Theorem~\ref{thm:main} and that $v(x)\neq0$ for every
$x\in\overline\Om$. Suppose, toward a contradiction, that
$\omega(x_*)\neq0$ at some point $x_*\in\Om$.

\medskip
\noindent
\underline{\emph{(a) An initial arc where the vorticity does not vanish.}}
By Lemma \ref{lem:escape}, the trajectory through $x_*$ escapes to infinity.
Choose
\[
  R_1>\max\left\{{\overline{R}},|x_*|,
  \sup_{z\in\obs}|z|\right\},
\]
where ${\overline{R}}$ is the one in Lemma \ref{lem:escape}. Define the first hitting time
\[
  t_*:=\inf\{t\geq0:|X(t,x_*)|=R_1\}.
\]
By Lemma~\ref{lem:escape}, this time is finite, the crossing is transverse and
outward, and the trajectory does not return to $\partial B_{R_1}$.
Set $y_*:=X(t_*,x_*)$. By Lemma \ref{lem:vorticity-transport},
$\omega(y_*)=\omega(x_*)\neq0$.

Let $\sigma:=\sign(\omega(y_*))$. By continuity of $\omega$, there exist
$\delta>0$ and a nondegenerate closed proper arc
$I_{R_1}\subset\partial B_{R_1}$ containing $y_*$ in its relative interior
such that
\begin{equation}\label{eq:omega-on-initial-arc}
  \sigma\omega\geq\delta
  \ \textup{on }I_{R_1}.
\end{equation}
Let $\gamma_1:[s_-,s_+]\to I_{R_1}$ be an injective regular
$C^1$ parametrization chosen as the restriction of a regular
parametrization of $\partial B_{R_1}$ on a slightly larger
open interval. Since $v_r>0$ on $\partial B_{R_1}$, we have that
\[
  \det\bigl(v(\gamma_1(s)),\gamma_1'(s)\bigr)\neq0,
  \qquad\textup{for }s\in[s_-,s_+].
\]

\medskip
\noindent
\underline{\emph{(b) Transverse sections of the tube.}}

For $s\in[s_-,s_+]$ and $\rho\geq R_1$, define
\[
  \tau(s,\rho)
  :=
  \inf\{t\geq0:|X(t,\gamma_1(s))|=\rho\}.
\]

{{By Lemma} \ref{lem:escape},}
$\tau(s,\rho)$ is finite and unique. We now justify the regularity of the hitting time. Use the chosen extension of
$\gamma_1$ to a slightly larger open interval and define
\[
  G(t,s,\rho):=|X(t,\gamma_1(s))|-\rho \quad {\textup{for }t\geq 0, s\in [s_-,s_+] \textup{ and }\rho\geq R_1}.
\]
{{By Lemma} \ref{lem:escape}, $G(t,s,\rho)=0$ {if and only if} } $t=\tau(s,\rho)$. {At such points, it holds that }
\begin{equation}\label{eq:hitting-time-transversality}
  \partial_tG(\tau(s,\rho),s,\rho)
  =v_r\bigl(X(\tau(s,\rho),\gamma_1(s))\bigr)>0.
\end{equation}
{{By the implicit function theorem}, there exists a unique $C^1$ map $\varphi(s,\rho)$  such that $|X(\varphi(s,\rho),\gamma_1(s))|-\rho=0$. {By uniqueness of} $\tau(s,\rho)$, we conclude that $\varphi(s,\rho)=\tau(s,\rho)$, and thus  $(s,\rho)\mapsto \tau(s,\rho)$ is $C^1$} 

on every compact rectangle
$\mathcal{R}_R:= [s_-,s_+]\times[R_1,R]$. 
We now define
\begin{equation}\label{eq:stream-tube-parametrization}
  F(s,\rho)
  :=
  X(\tau(s,\rho),\gamma_1(s)).
\end{equation}
We first show that $F$ is injective. To prove it, consider $(s,\rho)$ and $(s',\rho')$ such that
$F(s,\rho)=F(s',\rho')$. Then, by definition of $\tau$,
\[
  \rho
  =|X(\tau(s,\rho),\gamma_1(s))|
  =|X(\tau(s',\rho'),\gamma_1(s'))|
  =\rho'.
\]
On the other hand, {by uniqueness for}
the ODE \eqref{eq:flow-ode}, if
$X(\tau(s,\rho),\gamma_1(s))$ and
$X(\tau(s',\rho),\gamma_1(s'))$ are equal, then
$\gamma_1(s)$ and $\gamma_1(s')$ have to belong to the same
field line of $v$. But this is only possible if
$\gamma_1(s)=\gamma_1(s')$. Indeed,
{if these points were distinct, there would exist}
some $x\in\Omega$ and $t_1,t_2\in[0,\infty)$ such that
$X(t_1,x)=\gamma_1(s)$ and
$X(t_2,x)={\gamma_1(s')}$.
Without loss of generality, take $t_1<t_2$.

Since $|X(t_1,x)|=R_1>R_0$, condition
\eqref{eq:far-field-cost} prevents the trajectory from crossing
$\partial B_{R_1}$ inwards and makes its radial coordinate
strictly increasing for $t\geq t_1$. Hence
\[
  |\gamma_1(s)|=|X(t_1,x)|
  <|X(t_2,x)|=|\gamma_1(s')|.
\]
This is impossible, since both points lie on $\partial B_{R_1}$.
Therefore $\gamma_1(s)=\gamma_1(s')$, and the injectivity of
$\gamma_1$ gives $s=s'$. Together with $\rho=\rho'$, this proves
that $F$ is injective.
 Since $\mathcal R_R$ is compact and $F$ is continuous, $F$
is a homeomorphism from $\mathcal R_R$ onto its image.  \medskip
We also define
\begin{equation}\label{eq:transverse-arc}
  I_\rho:=\{F(s,\rho):s\in[s_-,s_+]\}
  \subset\partial B_\rho.
\end{equation}
We next show that $\partial_sF$ and $\partial_\rho F$ are
linearly independent. Differentiating $|F(s,\rho)|=\rho$
with respect to $\rho$ gives

\[
  \er(F(s,\rho))\cdot\partial_\rho F(s,\rho)=1.
\]

Similarly, differentiating \eqref{eq:stream-tube-parametrization} gives
$\partial_\rho F=v(F)\partial_\rho\tau$. Hence
\[
  1=\er(F)\cdot\partial_\rho F
  =v_r(F)\partial_\rho\tau,
  \quad
  \partial_\rho\tau=\frac1{v_r(F)}.
\]
Consequently,
\begin{equation}\label{eq:rho-direction}
  \partial_\rho F(s,\rho)
  =
  \frac{v(F(s,\rho))}{v_r(F(s,\rho))}.
\end{equation}

The derivative $\partial_sF$ is tangent to $\partial B_\rho$ and does not
vanish. To verify the latter assertion, suppose that
$\partial_sF(s,\rho)=0$. Differentiating
\eqref{eq:stream-tube-parametrization} with respect to $s$ gives
\[
  D_xX(\tau,\gamma_1(s))\gamma_1'(s)
  +v(F(s,\rho))\partial_s\tau(s,\rho)=0.
\]
The differential of the flow is invertible and satisfies
$D_xX(t,z)v(z)=v(X(t,z))$. It would follow that $\gamma_1'(s)$ is parallel
to $v(\gamma_1(s))$, contradicting transversality. Thus $\partial_sF\neq0$.
Since $\partial_sF$ is tangential and
$\er\cdot\partial_\rho F=1$, the two derivatives are linearly
independent. Together with the preceding injectivity argument,
this shows that $F$ is a $C^1$ embedding of $\mathcal R_R$. The inverse function theorem shows that $F$ is a local $C^1$
diffeomorphism in the interior of the parameter rectangle.
The extensions constructed above allow us to apply the same
argument at the boundary. \medskip

\noindent
\underline{\emph{(c) The truncated stream tube.}}
For $R>R_1$, define
\begin{equation}\label{eq:stream-tube}
  T_R
  :=
  F\bigl((s_-,s_+)\times(R_1,R)\bigr).
\end{equation}
Because $F$ is a $C^1$ embedding of the rectangle,
$\overline{T_R}=F(\mathcal R_R)$ and
$\partial T_R=F(\partial\mathcal R_R)$. The horizontal sides give the
transverse arcs $I_{R_1}$ and $I_R$, while the vertical sides give the two
streamline arcs
\[
  \Gamma_-^R:=\{F(s_-,\rho):R_1\leq\rho\leq R\},
  \quad
  \Gamma_+^R:=\{F(s_+,\rho):R_1\leq\rho\leq R\}.
\]
The four sides are regular and meet only at their corresponding corners. Hence $T_R$ is a bounded piecewise $C^1$ Jordan domain, so
the divergence and Green--Stokes theorems apply. By
\eqref{eq:rho-direction}, $v$ is tangent to both $\Gamma_-^R$ and $\Gamma_+^R$, and its 
normal flux {along either of these curves} vanishes. See Figure~\ref{fig:stream-tube-geometry} for an illustration of the geometry described above.

\begin{figure}[h]
  \centering
  \begin{tikzpicture}[
      scale=0.96,
      >=Stealth,
      line cap=round,
      line join=round,
      circleline/.style={
        draw=black!42,
        densely dashed,
        line width=0.8pt
      },
      tubefill/.style={
        fill=outflowblue!9
      },
      side/.style={
        draw=outflowblue,
        line width=1.35pt,
        postaction={decorate},
        decoration={
          markings,
          mark=at position 0.55 with
          {\arrow{Stealth[length=2.3mm]}}
        }
      },
      section/.style={
        draw=outflowred,
        line width=1.55pt
      },
      streamline/.style={
        draw=outflowblue!62,
        line width=0.8pt,
        postaction={decorate},
        decoration={
          markings,
          mark=at position 0.60 with
          {\arrow{Stealth[length=1.8mm]}}
        }
      }
    ]

    \def\rin{1.62}
    \def\rout{4.55}
    \def\anglow{-18}
    \def\anghigh{42}

    \path[tubefill]
      (\anglow:\rin)
      .. controls (2.27,-0.48) and (3.30,-0.63) ..
      (\anglow+4:\rout)
      arc[
        start angle=\anglow+4,
        end angle=\anghigh-5,
        radius=\rout
      ]
      .. controls (3.26,2.69) and (2.43,1.58) ..
      (\anghigh:\rin)
      arc[
        start angle=\anghigh,
        end angle=\anglow,
        radius=\rin
      ]
      -- cycle;

    \draw[circleline] (0,0) circle (\rin);
    \draw[circleline] (0,0) circle (\rout);

\draw[side]
  (\anglow:\rin)
  .. controls (2.27,-0.48) and (3.30,-0.63) ..
  (\anglow+4:\rout)
  node[pos=0.5,font=\small,below=8pt] {$\Gamma_-^R$};

\draw[side]
  (\anghigh:\rin)
  .. controls (2.43,1.58) and (3.26,2.69) ..
  (\anghigh-5:\rout)
  node[pos=0.5,font=\small,above=5pt] {$\Gamma_+^R$};
    \draw[section]
      (\anglow:\rin)
      arc[
        start angle=\anglow,
        end angle=\anghigh,
        radius=\rin
      ];

    \draw[section]
      (\anglow+4:\rout)
      arc[
        start angle=\anglow+4,
        end angle=\anghigh-5,
        radius=\rout
      ];

    \foreach \a/\b in {0/5,13/17,27/29}{
      \draw[streamline]
        (\a:\rin)
        .. controls
          (2.35,{0.36+0.035*\a})
          and
          (3.33,{0.22+0.055*\a})
        ..
        (\b:\rout);
    }

    \node[font=\small] at (1.02,0.66) {$I_{R_1}$};
    \node[font=\small] at (4.88,0.94) {$I_R$};
    \node[font=\small] at (2.72,0.88) {$T_R$};

    \node[font=\scriptsize,black!60]
      at (-0.22,-1.88) {$r=R_1$};

    \node[font=\scriptsize,black!60]
      at (-0.25,-4.82) {$r=R$};

  \end{tikzpicture}

\caption{The truncated stream tube $T_R$ generated by the arc
$I_{R_1}$. Its boundary consists of the circular sections $I_{R_1}$ and
$I_R$, shown in red, and the streamline sides $\Gamma_-^R$ and
$\Gamma_+^R$, shown in blue. The lighter curves represent trajectories
inside the tube.}
  \label{fig:stream-tube-geometry}
\end{figure}
\medskip
\noindent
\underline{\emph{(d) Conservation of flux.}}
For $\rho\geq R_1$, set
\begin{equation}\label{eq:tube-flux}
  {\Phi_T}(\rho):=\int_{I_\rho}v\cdot\er{\dd\ell}.
\end{equation}

Fix $R_1\leq\rho_1<\rho_2$ and set
\[
  T_{\rho_1,\rho_2}
  :=F\bigl((s_-,s_+)\times(\rho_1,\rho_2)\bigr).
\]
The same argument as in part (c) shows that
$T_{\rho_1,\rho_2}$ is a bounded piecewise $C^1$ Jordan
domain. Its outward unit normal is $\er$ on $I_{\rho_2}$
and $-\er$ on $I_{\rho_1}$. The velocity is tangent to
the two trajectory segments joining these sections.
The divergence theorem therefore gives
\[
  0=\int_{T_{\rho_1,\rho_2}}\diver v\dd x
   =\Phi_T(\rho_2)-\Phi_T(\rho_1).
\]
Thus ${\Phi_T}(\rho)={\Phi_T}(R_1)=:{\Phi_T}$. Since $v_r>0$ on the nondegenerate arc
$I_{R_1}$,
\begin{equation}\label{eq:positive-tube-flux}
  {\Phi_T}=\int_{I_{R_1}}v_r{\dd\ell}>0.
\end{equation}

\medskip
\noindent
\underline{\emph{(e) Quadratic growth of the tube.}}
Let
$\vartheta(\rho):=\mathcal H^1(I_\rho)/\rho$.
By \eqref{eq:speed-upper},
\[
  {\Phi_T}
  =\int_{I_\rho}v_r{\dd\ell}
  \leq\int_{I_\rho}|v|{\dd\ell}
  \leq\frac{C}{\rho}\mathcal H^1(I_\rho)
  =C\vartheta(\rho).
\]
Therefore,
\begin{equation}\label{eq:angular-width-lower}
  \vartheta(\rho)\geq\frac {{\Phi_T}}C
  \ \textup{for }\rho\geq R_1.
\end{equation}
The function $r(x)=|x|$ is Lipschitz on $\overline{T_R}$ and satisfies
$|\nabla r|=1$ there. Moreover,
$T_R\cap\partial B_\rho$ is the relative interior of $I_\rho$ for
$R_1<\rho<R$. Hence the coarea formula gives
\[
  \mathcal L^2(T_R)
  =\int_{R_1}^R\mathcal H^1(I_\rho){\dd\rho}
  =\int_{R_1}^R\rho\vartheta(\rho){\dd\rho}.
\]
Consequently,
\begin{equation}\label{eq:tube-area-lower}
  \mathcal L^2(T_R)
  \geq\frac{{\Phi_T}}{2C}(R^2-R_1^2).
\end{equation}

\medskip
\noindent
\underline{\emph{(f) Quadratic lower bound for the total vorticity.}}
Every $y\in T_R$ can be written as
$y=X(\tau(s,\rho),\gamma_1(s))$ {for $\rho=|y|$ and some $s\in (s_-,s_+)$}. 
{{By Lemma} \ref{lem:vorticity-transport}} and
\eqref{eq:omega-on-initial-arc},
\begin{equation}\label{eq:vorticity-sign-in-tube}
  \sigma\omega(y)\geq\delta
  \ \textup{for }y\in T_R.
\end{equation}
Combining this with \eqref{eq:tube-area-lower}, we obtain
\begin{equation}\label{eq:vorticity-lower}
  \left|\int_{T_R}\omega{\dd y}\right|
  \geq\delta\mathcal L^2(T_R)
  \geq\frac{\delta \Phi_T}{2C}(R^2-R_1^2).
\end{equation}

\medskip
\noindent
\underline{\emph{(g) Logarithmic upper bound from Stokes' theorem.}}
The field $v$ is $C^1$ on a neighborhood of $\overline{T_R}$, its curl is
the continuous function $\omega$, and $\partial T_R$ is piecewise $C^1$.
Let $\mathbf t_{\partial T_R}$ denote the positively oriented unit tangent to
$\partial T_R$, defined on each regular boundary arc. The 
Green--Stokes theorem applies and gives
\begin{equation}\label{eq:stokes-on-tube}
  \int_{T_R}\omega{\dd y}
  =
  \int_{\partial T_R}v\cdot\mathbf t_{\partial T_R}{\dd\ell}.
\end{equation}
For $\rho\in\{R_1,R\}$, the map $s\mapsto F(s,\rho)$ is a regular injective parametrization of a
compact interval into $\partial B_\rho$. Its image is therefore a simple
proper circular arc, and
$\mathcal H^1(I_\rho)\leq2\pi\rho$. It follows that
\[
  \left|\int_{I_\rho}
  v\cdot\mathbf t_{\partial T_R}{\dd\ell}\right|
  \leq\frac{C}{\rho}\mathcal H^1(I_\rho)
  \leq2\pi C.
\]
Thus the two transverse arcs contribute at most $4\pi C$. Consider either streamline side and write
$x_\pm(r):=F(s_\pm,r)$, $R_1\leq r\leq R$. Formula
\eqref{eq:rho-direction} gives
\[
  x_\pm'(r)=\frac{v(x_\pm(r))}{v_r(x_\pm(r))},
  \quad
  \dd\ell=\frac{|v(x_\pm(r))|}{v_r(x_\pm(r))}{\dd r}.
\]
Consequently,
\begin{equation}\label{eq:streamline-side-identity}
  \int_{\Gamma_\pm^R}|v|{\dd\ell}
  =
  \int_{R_1}^R
  \frac{|v(x_\pm(r))|^2}{v_r(x_\pm(r))}{\dd r}.
\end{equation}
Condition \eqref{eq:far-field-cost} yields
\begin{equation}\label{eq:streamline-side-bound}
  \int_{\Gamma_\pm^R}|v|{\dd\ell}
  \leq C\int_{R_1}^R\frac{\dd r}{r}
  =C\log\frac R{R_1}.
\end{equation}
There are two streamline sides. Therefore,
\begin{equation}\label{eq:circulation-upper}
  \left|\int_{T_R}\omega{\dd y}\right|
  \leq4\pi C+2C\log\frac R{R_1}.
\end{equation}

\medskip
\noindent
\underline{\emph{(h) Contradiction.}}
Estimates \eqref{eq:vorticity-lower} and
\eqref{eq:circulation-upper} imply
\[
  \frac{\delta \Phi_T}{2C}(R^2-R_1^2)
  \leq4\pi C+2C\log\frac R{R_1}.
\]
The constants $R_1$, $\delta$, $\Phi_T$ and $C$ are
independent of $R$. After division by $R^2$ and passage to the limit $R\to\infty$, we obtain
$\delta \Phi_T/(2C)\leq0$, contradicting $\delta>0$ and
$\Phi_T>0$. Hence
$\omega\equiv0$ in $\Om$. This proves implication \ref{itemi} $\Rightarrow$ \ref{itemii} and completes
the proof of Theorem~\ref{thm:main}.
\section*{Acknowledgments}
D. Alonso-Or\'an acknowledges partial support from grant RYC2023-045563-I,
funded by
MICIU/\allowbreak{}AEI/\allowbreak{}10.13039/\allowbreak{}501100011033
and ESF+.
D. Alonso-Or\'an also acknowledges support from the projects
``EDPs Deterministas y Estoc\'asticas en Biolog\'ia y Mec\'anica
de Fluidos'' (PID2025-171734NAI00) and
``An\'alisis Matem\'atico Aplicado y Ecuaciones Diferenciales''
(PID2022-141187NB-I00), funded by
MCIN/\allowbreak{}AEI/\allowbreak{}10.13039/\allowbreak{}501100011033/\allowbreak{}FEDER, UE.

D. Sánchez-Simón del Pino and J. J. L. Velázquez have been also funded by the Deutsche Forschungsgemeinschaft (DFG, German Research Foundation) under Germany’s Excellence Strategy - GZ 2047/1, Projekt-ID 390685813.

D. Sánchez-Simón del Pino further acknowledges the support of the Bonn International Graduate School of Mathematics (BIGS) at the Hausdorff Centre for Mathematics.

\end{document}